\newtheorem{theorem}		{Theorem}[section]
\newtheorem{lemma}		[theorem]{Lemma}
\newtheorem{sideremark}		[theorem]{Remark}
\newtheorem{sidenote}		[theorem]{Note}
\newtheorem{sideeg}		[theorem]{Example}
\newtheorem{sideconj}		[theorem]{Conjecture}
\newtheorem{sideassumption}	[theorem]{Assumption}
\newenvironment{remark}		{\begin{sideremark}\rm}{\end{sideremark}}
\newenvironment{assumption}	{\begin{sideassumption}\it}{\end{sideassumption}}
\newcommand{\done}{\ding{182}}
\newcommand{\dtwo}{\ding{183}}
\newcommand{\dthree}{\ding{184}}
\newcommand{\dfour}{\ding{185}}
\newcommand{\dfive}{\ding{186}}
\newcommand{\dsix}{\ding{187}}
\newcommand{\ba}			{\begin{array}}
\newcommand{\ea}			{\end{array}}
\newcommand{\nn}			{{\nonumber}}
\newcommand{\er}[1]		{{(\ref{#1})}}
\newcommand{\op}[1]		{{\mathcal{#1}}}
\newcommand{\ophat}[1]		{{\widehat{\op{#1}}}}
\newcommand{\ts}[1]			{{\textstyle{#1}}}
\newcommand{\pdtone}[2]		{{\frac{\partial {#1}}{\partial {#2}}}}
\newcommand{\cQ}			{{\mathbb{Q}}}
\newcommand{\cSym}		{{\mathbb{S}}}
\newcommand{\cY}			{{\mathscr{Y}}}
\newcommand{\demi}		{{\ts{\frac{1}{2}}}}
\newcommand{\dom}		{{\mathsf{dom}\, }}
\newcommand{\Hinfty}		{{\mathscr{H}_\infty}}
\newcommand{\hinfty}		{{$\Hinfty$}}
\newcommand{\Htwo}		{{\mathscr{H}_2}}
\newcommand{\htwo}		{{$\Htwo$}}
\newcommand{\Ltwo}		{{\mathscr{L}_2}}
\newcommand{\ltwo}			{{$\Ltwo$}}
\newcommand{\N}			{{\mathbb{N}}}
\newcommand{\R}			{{\mathbb{R}}}
\begin{document}

%%%%%%%%%%%%%%%%%%%%%%%%%%%%%%%%%%%%%%%%%%%%%%%%%%%%%%%%%%%%%%%%%%%%%%%
%%
%%		Title.
%%

\title{A new fundamental solution for a class of differential Riccati equations$^*$% arising in {\ltwo}-gain analysis*
}

\author{Peter M. Dower$^{\dagger}$ \qquad\qquad\qquad Huan Zhang$^{\dagger}$% <-this % stops a space
\thanks{*This research was partially supported by the Australian Research Council and AFOSR/AOARD grants DP120101549 and FA2386-12-1-4084 respectively.}% <-this % stops a space
\thanks{$^{\dagger}$Dower and Zhang are with the Department of Electrical \& Electronic Engineering at the University of Melbourne, 
        Victoria 3010, Australia.
        {\tt\small \{pdower,hzhang\}@unimelb.edu.au}}%
}

%%		Make the title.

\maketitle
\thispagestyle{empty}
\pagestyle{empty}

%%%%%%%%%%%%%%%%%%%%%%%%%%%%%%%%%%%%%%%%%%%%%%%%%%%%%%%%%%%%%%%%%%%%%%%
%%
%%		Abstract.
%%

\begin{abstract}
A class of differential Riccati equations (DREs) is considered whereby the evolution of any solution can be identified with the propagation of a value function of a corresponding optimal control problem arising in {\ltwo}-gain analysis. By exploiting the semigroup properties inherited from the attendant dynamic programming principle, a max-plus primal space fundamental solution semigroup of max-plus linear max-plus integral operators is developed that encapsulates all such value function propagations. Using this semigroup, a new one-parameter fundamental solution semigroup of matrices is developed for the aforementioned class of DREs. It is demonstrated that this new semigroup can be used to compute particular solutions of these DREs, and to characterize finite escape times (should they exist) in a relatively simple way compared with that provided by the standard symplectic fundamental solution semigroup.
\end{abstract}

%%%%%%%%%%%%%%%%%%%%%%%%%%%%%%%%%%%%%%%%%%%%%%%%%%%%%%%%%%%%%%%%%%%%%%%
%%
%%		Introduction.
%%

\section{INTRODUCTION}
\label{sec:intro}

Differential Riccati equations (DREs) arise naturally in linear optimal control and dissipative systems theory \cite{AM:71,DGKF:89,PAJ:91,GL:95}. A typical finite dimensional DRE applicable in the verification of the {\ltwo}-gain property for linear systems is an ordinary differential equation defined via matrices $A\in\R^{n\times n}$, $B\in\R^{n\times m}$, $C\in\R^{p\times n}$, $n,m,p\in\N$, by
\begin{align}
	\dot P_t
	& = A' P_t + P_t A + P_t B B' P_t + C' C\,,
	\label{eq:DRE}
\end{align}
in which $P_t\in\cSym^{n\times n}$ describes a particular symmetric matrix valued solution evolved forward from an initial condition
\begin{align}
	P_0 & \in\cSym_{>M}^{n\times n},
	\label{eq:IC-M}
\end{align}
residing in the space of symmetric matrices exceeding some $M\in\cSym^{n\times n}$, to any time $t\in[0,t^*)$ in some maximal horizon of existence $t^* = t^*(P_0)\in\R_{>0}^+ \doteq \R_{>0}\cup\{+\infty\}$. Related DREs arise in linear {\htwo}- and {\hinfty}-control and filtering, etc, see for example \cite{DGKF:89,PAJ:91,GL:95}.

A fundamental solution for DRE \er{eq:DRE} is a mathematical object that characterizes every possible solution of that DRE, as parameterized by its initial (or terminal) condition \er{eq:IC-M}. One such fundamental solution is the {\em symplectic fundamental solution}, which is itself the solution of a (derived) Hamiltonian system of linear ordinary differential equations, see for example \cite{AM:71,DM:73,LL:06}. Another fundamental solution is the {\em max-plus dual-space fundamental solution} \cite{M:08,DM1:15,ZD1:15,ZD2:15}, which is constructed by exploiting semiconvex duality \cite{FM:00} and max-plus linearity of the Lax-Oleinik semigroup \cite{KM:97} of dynamic programming evolution operators for an associated optimal control problem, see also \cite{KM:97,LMS:01,AGK:05,CGQ:04,M:06}.

In this paper, a new {\em max-plus primal space fundamental solution} is provided for DREs of the form \er{eq:DRE}, \er{eq:IC-M}. This fundamental solution can be used to evaluate particular solutions of \er{eq:DRE}, analogously to the symplectic and max-plus dual space fundamental solutions. Its development is complementary to that of the max-plus dual space fundamental solution documented in \cite{M:08,DM1:15,ZD2:15}, and parallels the corresponding recent primal space development for difference Riccati equations \cite{ZD1:15}. It is shown that this new fundamental solution provides a simpler test for establishing existence of solutions of \er{eq:DRE}, \er{eq:IC-M} when compared with the symplectic fundamental solution.

In terms of organization, the symplectic fundamental solution for DRE \er{eq:DRE} is recalled in Section \ref{sec:symplectic} for comparative purposes, to formalize existence of solutions, and to construct a specific particular solution to \er{eq:DRE} of utility later. The max-plus primal space fundamental solution, and corresponding fundamental solution semigroup, is subsequently constructed in Sections \ref{sec:fund} and \ref{sec:fund-semigroups}, using the aforementioned particular solution. An illustration of its application is provided in Section \ref{sec:example}, followed by some brief concluding remarks in Section \ref{sec:conc}. Proofs are largely delayed to the appendices.

Throughout, $\N$, $\cQ$, $\R$ denote respectively the natural, rational, and real numbers, while $\R_{\ge 0}$, $\R^n$, $\R^{n\times n}$ denote respectively the nonnegative real numbers, $n$-dimensional Euclidean space, and the space of $n\times n$ matrices with real entries. $\R^\pm$, etc, denotes the analogous sets defined with respect to extended reals $\R\cup\{\pm\infty\}$. Similarly, $\cSym^{n\times n}$, $\cSym_{\ge 0}^{n\times n}$, $\cSym_{>0}^{n\times n}$ denote the spaces of symmetric, nonnegative symmetric, and positive definite symmetric elements of $\R^{n\times n}$ respectively. Further extending this notation, $\cSym_{>M}^{n\times n}$ denotes the subset of $\cSym^{n\times n}$ of matrices $P$ satisfying $P - M\in\cSym_{>0}^{n\times n}$, etc. The transpose of $P\in\R^{n\times n}$ is denoted by $P'\in\R^{n\times n}$. The corresponding identity is denoted by $I\in\cSym^{n\times n}$. Given $U\in\R^{2n\times 2n}$, the two-by-two block matrix representation
\begin{align}
	U & = \left[ \ba{cc} 
			U^{11} & U^{12} \\
			U^{21} & U^{22}
	\ea \right] \in \R^{2n\times 2n},
	\label{eq:block}
\end{align}
with $U^{ij}\in\R^{n\times n}$, $i,j\in\{1,2\}$, is used where convenient.

%%%%%%%%%%%%%%%%%%%%%%%%%%%%%%%%%%%%%%%%%%%%%%%%%%%%%%%%%%%%%%%%%%%%%%%
%%
%%		Symplectic fundamental solution.
%%

\section{SYMPLECTIC FUNDAMENTAL SOLUTION}
\label{sec:symplectic}
Existence of a unique solution to DRE \er{eq:DRE}, subject to \er{eq:IC-M}, may be verified by application of Banach's fixed point theorem, see for example \cite[Theorem 2.4]{DM1:15}. Alternatively, it may be constructed directly as
\begin{align}
	P_t
	& = Y_t X_t^{-1}
	\label{eq:symplectic-P}
\end{align}
in which $X_t,Y_t\in\R^{n\times n}$ are defined with respect to the {\em symplectic fundamental solution} $\Sigma_t\in\R^{2n\times 2n}$ for \er{eq:DRE} by
\begin{align}
	\begin{gathered}
	\left[ \ba{c}
		X_t \\ Y_t
	\ea \right] = \Sigma_t \left[ \ba{c} I \\ P_0 \ea \right],
	\quad
	t\in[0,t^*(P_0))\,,
	\\
	\Sigma_t \doteq \exp(\mathcal{H} t)\,,
	\quad
	\mathcal{H}
	\doteq \left[ \ba{cc}
		-A & -B B' \\
		\ C' C & A'
	\ea \right],
	\end{gathered}
	\label{eq:symplectic-XY}
\end{align}
in which the maximal horizon of existence $t^*(P_0)\in\R_{>0}^+$ of the unique particular solution $P_t$ in \er{eq:symplectic-P} is characterized by
\begin{align}
	t^*(P_0)
	& \doteq
	\sup \left\{ t\in\R_{>0} \, \left| \! \ba{c}
				 X_s^{-1} \text{ exists } \forall \ s\in(0,t]
				 \\
				\text{with }X_s \text{ given by \er{eq:symplectic-XY}}
				\\
				 \text{subject to } P_0\in\cSym^{n\times n}
	\ea \! \right. \right\},
	\label{eq:symplectic-escape}
\end{align}
see \cite{DM:73,S:82,KB:12}. This maximal horizon of existence is either strictly positive and finite, or infinite. Where $t^*(P_0)$ is strictly positive, the solution $P_t$ experiences a finite escape at $t=t^*(P_0)$. Otherwise, no such such finite escape time exists, and $P_t$ may be evolved to any arbitrarily large time horizon $t\in\R_{>0}$. For example, under the conditions of the strict bounded real lemma (e.g. \cite[Theorem 2.1]{PAJ:91} or \cite[Theorem 3.7.4]{GL:95}), $P_0 = 0\in\cSym^{n\times n}$ implies that $t^*(P_0) = +\infty$.

By inspection, the symplectic fundamental solution $\Sigma_t$, defined by \er{eq:symplectic-P}, \er{eq:symplectic-XY}, \er{eq:symplectic-escape} satisfies the properties of a fundamental solution for DRE \er{eq:DRE}. In particular, it can be evolved independently of any specific DRE initial condition $P_0$, and can be used to recover any such particular solution via an operation involving that $P_0$. It is a standard tool for the representation and computation of solutions to DREs of the form \er{eq:DRE}. In Section \ref{sec:fund}, it is used to construct a particular solution of a DRE of the form \er{eq:DRE} that is employed in the construction the max-plus primal space fundamental solution of interest.

%%%%%%%%%%%%%%%%%%%%%%%%%%%%%%%%%%%%%%%%%%%%%%%%%%%%%%%%%%%%%%%%%%%%%%%
%%
%%		Max-plus fundamental solution.
%%

\section{MAX-PLUS FUNDAMENTAL SOLUTION}
\label{sec:fund}

\newcommand{\cSvex}[1]		{{\mathscr{S}_+^{{#1}}}}
\newcommand{\cSave}[1]		{{\mathscr{S}_-^{{#1}}}}

%%		Max-plus algebra and semiconvex duality.

\subsection{Max-plus algebra and semiconvex duality}
The max-plus algebra \cite{KM:97,M:08} is a commutative semifield over $\R^-$, equipped with addition and multiplication operators defined respectively by $a\oplus b\doteq \max(a,b)$ and $a\otimes b\doteq a + b$. It is an idempotent algebra, as the $\oplus$ operation is idempotent (i.e. $a\oplus a = a$), and a semifield as additive inverses do not exist. The max-plus integral of a function $f:\R^n\rightarrow\R^-$ over a subset $\cY\subset\R^n$ of its domain is $\int_\cY^\oplus f(y)\, dy \doteq \sup_{y\in\cY} f(y)$. The max-plus delta function $\delta^-:\R^n\times\R^n\rightarrow\R^-$ is defined for all $x,y\in\R^n$ by
\begin{align}
	\delta^-(x,y)
	& \doteq \left\{ \ba{rl}
		0\,, & x=y\,,
		\\
		-\infty\,, & x\ne y\,.
	\ea \right.
	\label{eq:delta}
\end{align}
In developing a max-plus fundamental solution, it is useful to introduce spaces of uniformly semiconvex and semiconcave functions, defined with respect to $K\in\cSym^{n\times n}$, by
\begin{equation}
	\begin{aligned}
	\cSvex{K}
	& \doteq \left\{ f:\R^n\rightarrow\R^- \, \left| \, \ba{c}
									f + \demi\, \langle \cdot ,\, K\,  \cdot \rangle
									\\
									\text{convex}
								\ea \right. \right\},
								\quad
	\\
	\cSave{K}
	& \doteq \left\{ a:\R^n\rightarrow\R^- \, \left| \, \ba{c}
									a - \demi\, \langle \cdot ,\, K\,  \cdot \rangle
									\\
									\text{concave}
								\ea \right. \right\},
								\quad								
	\end{aligned}
	\label{eq:semiconvex}
\end{equation}
respectively. Semiconvex duality is a duality between these spaces of semiconvex and semiconcave functions, that is established via the semiconvex transform \cite{FM:00}. The semiconvex transform is a generalization of the Legendre-Fenchel transform \cite{M:70,R:74,RW:97}, in which convexity is weakened to semiconvexity via a quadratic basis function $\varphi:\R^n\times\R^n\rightarrow\R$. This basis function is defined for all $x,z\in\R^n$ by
\begin{align}
	& \varphi(x,z)
	\doteq \demi  (x-z)'  M (x-z) = \demi \left[ \ba{c} x \\ z \ea \right]' \mu(M)\, \left[ \ba{c} x \\ z \ea \right],
	\label{eq:basis}
\end{align}
in which $M\in\cSym^{n\times n}$, and $\mu:\cSym^{n\times n}\rightarrow\cSym^{2n\times 2n}$ is defined by
\begin{align}
	& \mu(P)
	\doteq \left[ \ba{cc}
			+P & -M \\
			-M & +M
		\ea \right]\in\cSym^{2n\times 2n},
		% \, M\in\cSym^{n\times n} \text{ invertible.}
	\label{eq:mu}
\end{align}
for all $P\in\cSym^{n\times n}$.
\begin{assumption}
\label{ass:M}
Matrix $M\in\cSym^{n\times n}$ defining the basis \er{eq:basis} satisfies the following properties:
\begin{enumerate}[1)]
\item $M^{-1}\in\cSym^{n\times n}$ exists;
% \item $A' \,M+M\, A+M\, B\, B'\, M + C\in\cSym_{>0}^{n\times n}$; and
\item $t^*(M)=+\infty$, cf. \er{eq:symplectic-escape}.
\end{enumerate}
\end{assumption}

Standard conditions under which Assumption \ref{ass:M} holds are controllability and observability of $(A,B)$ and $(C,A)$ respectively, or via the strict bounded real lemma, see for example \cite{PAJ:91}. The details are postponed to Lemma \ref{lem:ass-M-holds}.

The semiconvex transform and its inverse are well-defined with respect to the basis $\varphi$ of \er{eq:basis} by
\begin{align}
	\op{D}_\varphi\, \psi
	& \doteq -\int_{\R^n}^\oplus \varphi(x,\cdot) \otimes (-\psi(x)) \, dx\,,
	\label{eq:op-D}
	\\
	\op{D}_\varphi^{-1}\, a
	& \doteq \int_{\R^n}^\oplus \varphi(\cdot,z) \otimes a(z)\, dz\,,
	\label{eq:op-D-inv}
\end{align}
for all $\psi\in\dom(\op{D}_\varphi) \doteq \cSvex{-M}$ and $a\in\dom(\op{D}_\varphi^{-1}) \doteq \cSave{-M}$, see also \cite{M:06,M:08,DM1:15,DMZ1:15}. For quadratic functions, \er{eq:op-D} and \er{eq:op-D-inv} define a pair of matrix operations on corresponding spaces of Hessians. In particular, with $\psi:\R^n\rightarrow\R$ defined with respect to some $P\in\cSym_{> M}^{n\times n}$ by $\psi(x) \doteq \demi x' P x$ for all $x\in\R^n$, application of \er{eq:op-D} yields a well-defined semiconvex dual. In particular, $a(z) = \demi z' \Upsilon(P) z$ for all $z\in\R^n$, with $\Upsilon:\cSym^{n\times n}\rightarrow\cSym^{n\times n}$ defined by
\begin{align}
	& \Upsilon(P)
	\doteq -M - M (P-M)^{-1} M,
	\qquad 
	P\in\dom(\Upsilon),
	\nn\\
	& \dom(\Upsilon)
	\doteq \cSym_{>M}^{n\times n}.
	\label{eq:Upsilon}
\end{align}
Similarly, the inverse semiconvex transform \er{eq:op-D-inv} corresponds to the inverse map $\Upsilon^{-1}$, with
\begin{align}
	& \Upsilon^{-1}(P)
	\doteq M - M (P+M)^{-1} M,
	\qquad P\in\dom(\Upsilon^{-1}),
	\nn\\
	& \dom(\Upsilon^{-1}) \doteq \cSym_{<-M}^{n\times n}\,.
	\label{eq:Upsilon-inv}
\end{align}
\begin{remark}
\label{rem:pseudo}
The domains specified in \er{eq:Upsilon} and \er{eq:Upsilon-inv} may be extended to $\cSym_{\ge M}^{n\times n}$ and $\cSym_{\le -M}^{n\times n}$ respectively, via corresponding Moore-Penrose pseudo-inverses. However, this extension is not required here, and the details are omitted.
\end{remark}

%%		Optimal control problem.

\subsection{Optimal control problem}
In order to construct a max-plus fundamental solution for the propagation of solutions of DRE \er{eq:DRE}, \er{eq:IC-M}, it is useful to define a corresponding optimal control problem on a finite time horizon $t\in\R_{\ge 0}$ via the value function $W_t:\R^n\rightarrow\R$ given by
\begin{align}
	W_t(x)
	& = (\op{S}_t\, \Psi)(x)
	\label{eq:value-W}
\end{align}
for all $x\in\R^n$. Here, $\Psi:\R^n\rightarrow\R$ denotes the terminal payoff $\Psi(x) \doteq \demi x' P_0\, x$ for all $x\in\R^n$, in which $P_0\in\cSym_{>M}^{n\times n}$ is as per \er{eq:IC-M}, with $M\in\cSym^{n\times n}$ as per \er{eq:basis}. The {\em dynamic programming evolution operator} $\op{S}_t$ appearing in \er{eq:value-W} is defined by
\begin{align}
	& (\op{S}_t\, \psi)(x)
	\doteq \sup_{w\in\Ltwo([0,t];\R^m)} J_\psi(t,x,w),
	\quad\psi\in\dom(\op{S}_t),
	\nn\\
	& \dom(\op{S}_t) \doteq \left\{ \psi:\R^n\rightarrow\R^- \, \left| \ba{c}
									(\op{S}_t\, \psi)(x)\in\R^-
									\\ 
									\forall\ x\in\R^n
							\ea \right. \right\},
	\label{eq:op-S}
\end{align}
for all $x\in\R^n$. Payoff $J_\psi(t,\cdot,\cdot):\R^n\times\Ltwo([0,t];\R^m)\rightarrow\R^-$ is defined by
\begin{align}
	& J_\psi(t,x,w)
	\doteq \int_0^t \demi |y_s|^2 - \demi |w_s|^2 \, ds + \psi(x_t)
	\label{eq:payoff}
\end{align}
for all $x\in\R^n$, $w\in\Ltwo([0,t];\R^m)$, in which $x_s\in\R^n$, $w_s\in\R^m$, and $y_s\in\R^p$ denote the state, input, and output (respectively) of the linear system 
\begin{equation}
	\begin{aligned}
	\dot x_s
	& = A x_s + B w_s\,,
	&& x_0 = x\in\R^n\,,
	\\
	y_s & = C x_s\,,
	&&
	\end{aligned}
	\label{eq:dynamics}
\end{equation}
at time $s\in[0,t]$.  It is straightforward to show that the value function $W_t$ of \er{eq:value-W} is quadratic, see \cite{AM:71,M:08,DM1:15,ZD1:15}, with
\begin{align}
	W_t(x)
	& = (\op{S}_t \Psi)(x) = \demi x' P_t x 
	\label{eq:W}
\end{align}
for all $x\in\R^n$, with $P_t\in\cSym^{2n\times 2n}$ satisfying DRE \er{eq:DRE} subject to the initial condition \er{eq:IC-M}.

%%		Auxiliary optimal control problem.

\subsection{Auxiliary optimal control problem}
It constructing a max-plus fundamental solution for \er{eq:DRE}, it is useful to introduce an auxiliary optimal control problem defined on the same finite time horizon $t\in\R_{\ge 0}$ with value function $S_t(\cdot,z):\R^n\rightarrow\R$, $z\in\R^n$, defined in terms of the dynamic programming evolution operator $\op{S}_t$ of \er{eq:op-S} by
\begin{align}
	S_t(x,z)
	& \doteq (\op{S}_t \, \varphi(\cdot,z))(x)
	\label{eq:value-S}
\end{align}
for all $x\in\R^n$. This value function is again quadratic, with
\begin{align}
	S_t(x,z)
	&
	= \demi\, \left[ 
			\ba{c} x \\ z \ea \right]' Q_t\, \left[ \ba{c} x \\ z \ea \right],
	\label{eq:S}
\end{align}
for all $x,z\in\R^n$, in which $Q_t\in\cSym^{2n\times 2n}$ is the unique solution of the DRE
\begin{align}
	\dot Q_t
	& = \hat A' Q_t + Q_t \hat A + Q_t\, \hat B \hat B' Q_t + \hat C' \hat C
	\label{eq:Q-DRE}
\end{align}
initialized with
\begin{align}
	& Q_0 = \mu(M)\in\R^{2n\times 2n}
	\label{eq:Q-IC}
\end{align}
as per \er{eq:basis}, \er{eq:mu}, for all $t\in[0,t^*(Q_0))$. Here, $t^*(Q_0)\in\R_{>0}^+$ denotes the corresponding maximal horizon of existence \er{eq:symplectic-escape}, while the constant matrices $\hat A\in\R^{2n\times 2n}$, $\hat B\in\R^{2n\times m}$, and $\hat C\in\R^{p\times 2n}$ appearing in \er{eq:Q-DRE} are defined by
\begin{align}
	\hat A
	& \doteq \left[ \ba{cc}
			A & 0 \\
			0 & 0
	\ea \right],
	\quad
	\hat B \doteq \left[ \ba{c}
				B \\
				0
	\ea \right],
	\quad
	\hat C \doteq \left[ \ba{cc}
				C & 0
	\ea \right].
	\label{eq:ABC-hat}
\end{align}
Equivalently, using the notation of \er{eq:block}, DRE \er{eq:Q-DRE}, \er{eq:Q-IC} implies that $Q_t^{11}, Q_t^{22}\in\cSym^{n\times n}$, $Q_t^{12}\in\R^{n\times n}$ satisfy
\begin{align}
	\dot Q_t^{11}
	& = A' Q_t^{11} + Q_t^{11} A + Q_t^{11} B B' Q_t^{11} + C' C\,,
	\label{eq:Q-11}
	\\
	\dot Q_t^{12}
	& = (A + B B' Q_t^{11})' Q_t^{12}\,, \ Q_t^{21} = (Q_t^{12})'\,,
	\label{eq:Q-12}
	\\
	\dot Q_t^{22}
	& = (Q_t^{12})' B B' Q_t^{12}\,,
	\label{eq:Q-22}
\end{align}
for all $t\in[0,t^*(M))$, subject to $Q_0^{11} = -Q_0^{12} = Q_0^{22} = M$, with $M\in\cSym^{n\times n}$ as per \er{eq:basis}. As \er{eq:Q-12} and \er{eq:Q-22} describe (respectively) a linear evolution equation and an integration, any finite escape of $Q_t$ must be due to the dynamics \er{eq:Q-11}, see for example \cite[Proposition 3.6(iv)]{BDDM:07}. That is, the maximal horizon of existence for \er{eq:Q-DRE} and \er{eq:Q-11} must be equal, ie. $t^*(Q_0) = t^*(M)$. Assumption \ref{ass:M} further implies that
\begin{align}
	& t^*(Q_0) = t^*(M) = +\infty.
	\label{eq:equal-escape}
\end{align}
As DRE \er{eq:Q-11} is of the same form as \er{eq:DRE}, the particular solution $Q_t$ of DRE \er{eq:Q-DRE}, \er{eq:Q-IC} can be characterized explicitly via the symplectic fundamental solution \er{eq:symplectic-XY}.
\begin{theorem}
\label{thm:symplectic-and-Q}
Under Assumption \ref{ass:M}, the particular solution $Q_t$ of DRE \er{eq:Q-DRE}, \er{eq:Q-IC} and the symplectic fundamental solution $\Sigma_t$ of \er{eq:symplectic-XY} for DRE \er{eq:DRE} are equivalent. That is, there exists an invertible operator $\Xi:\cSym^{2n\times 2n}\rightarrow\cSym^{2n\times 2n}$ such that
\begin{align}
	Q_t
	& = \Xi(\Sigma_t),
	\qquad
	\Sigma_t = \Xi^{-1} (Q_t)
	\label{eq:symplectic-and-Q}
\end{align}
for all $t\in\R_{\ge 0}$.
\end{theorem}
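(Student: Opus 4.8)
The plan is to explicitly construct $\Xi$ from the known relationship between the symplectic fundamental solution and solutions of DREs of the form \er{eq:DRE}. The key observation is that DRE \er{eq:Q-DRE}, \er{eq:Q-IC} is precisely a DRE of the form \er{eq:DRE} but driven by the "hatted" data $(\hat A, \hat B, \hat C)$ of \er{eq:ABC-hat} rather than $(A,B,C)$, and initialized at $Q_0 = \mu(M)$. Therefore, by the same construction recalled in Section \ref{sec:symplectic} (i.e. \er{eq:symplectic-P}--\er{eq:symplectic-XY} applied in dimension $2n$), one has $Q_t = \hat Y_t \hat X_t^{-1}$ where $[\hat X_t';\hat Y_t']' = \hat\Sigma_t\, [I; Q_0]'$ and $\hat\Sigma_t = \exp(\hat{\mathcal H}t)$ with $\hat{\mathcal H}$ the Hamiltonian built from $(\hat A,\hat B,\hat C)$. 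The first step is to relate $\hat{\mathcal H}$ to $\mathcal H$: because of the block-zero structure in \er{eq:ABC-hat}, $\hat{\mathcal H}$ is (up to a fixed permutation of coordinates) block-diagonalizable in terms of $\mathcal H$ and a trivial nilpotent block. Consequently $\hat\Sigma_t$ can be written in closed form as a fixed (time-independent) linear image of $\Sigma_t$ together with elementary polynomial-in-$t$ blocks coming from $\exp$ of the nilpotent part.

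Next I would assemble $\Xi$. Since $Q_t = \hat Y_t\hat X_t^{-1}$ is a fixed algebraic function of the $2n\times 2n$ blocks of $\hat\Sigma_t$, and those blocks are in turn a fixed linear function of the blocks of $\Sigma_t$ (the nilpotent contributions being constant or explicitly known), the composite map $\Sigma_t \mapsto Q_t$ is a well-defined map $\Xi$ on $\cSym^{2n\times 2n}$; note $\Sigma_t$ already lies in a fixed manifold (it is symplectic), so restricting the domain/codomain as in the statement is legitimate. To get invertibility, I would run the construction backwards: given $Q_t$, recover $\hat X_t, \hat Y_t$ from $[\,\hat X_t;\hat Y_t\,] = \hat\Sigma_t[I;Q_0]$, which requires $\hat X_t$ (equivalently $X_t$) invertible — and this is exactly guaranteed by Assumption \ref{ass:M}(2) together with \er{eq:equal-escape}, which gives $t^*(Q_0)=t^*(M)=+\infty$, so $\hat X_t^{-1}$ exists for all $t\in\R_{\ge 0}$. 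From $\hat X_t, \hat Y_t$ one reconstructs $\hat\Sigma_t$ (using that it acts on the specific vector $[I;Q_0]$ and is symplectic, or more simply that $Q_0=\mu(M)$ is invertible by Assumption \ref{ass:M}(1), so $[I;Q_0]$ spans enough to pin down $\hat\Sigma_t$ via its defining ODE), and then inverts the fixed linear relation back to $\Sigma_t$. This yields $\Xi^{-1}$ and establishes both identities in \er{eq:symplectic-and-Q}.

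A cleaner alternative I would seriously consider is to avoid reconstructing full $2n\times 2n$ matrices and instead define $\Xi$ directly at the level of the DRE: both $\Sigma_t$ and $Q_t$ solve linear, respectively Riccati, matrix ODEs with constant coefficients and initial data at $t=0$; one checks that $t\mapsto \Xi(\Sigma_t)$ solves \er{eq:Q-DRE} with correct initial condition whenever $\Xi$ is chosen so that $\Xi(\Sigma_0)=\Xi(I)=\mu(M)=Q_0$ and $\Xi$ intertwines the flows, then invoke uniqueness of solutions to \er{eq:Q-DRE} (valid on all of $\R_{\ge 0}$ by \er{eq:equal-escape}) to conclude $Q_t=\Xi(\Sigma_t)$ identically. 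The main obstacle is the bookkeeping: writing $\Xi$ down explicitly in block form and verifying it is genuinely invertible as a map on $\cSym^{2n\times2n}$ (not merely along the trajectory), which hinges essentially on both parts of Assumption \ref{ass:M} — part (1) to make the fractional-linear pieces ($(P-M)^{-1}$-type expressions inherited from the $Y X^{-1}$ structure and from $\mu$) well-defined, and part (2)/\er{eq:equal-escape} to guarantee no finite escape spoils invertibility of $X_t$. I expect the symplectic/group structure of $\Sigma_t$ to be what makes the inverse map land back in the right space; making that precise is the one step requiring care rather than routine computation.
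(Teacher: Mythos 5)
Your overall strategy matches the paper's: recognize that \er{eq:Q-DRE}, \er{eq:Q-IC} is itself a DRE of the form \er{eq:DRE} in dimension $2n$ with data $(\hat A,\hat B,\hat C)$, build its symplectic fundamental solution $\widehat\Sigma_t = \exp(\ophat{H}t)$, observe that $\ophat{H}$ is a fixed permutation conjugate of $\mathrm{diag}(\op{H},0)$ so that $\widehat\Sigma_t = \Delta\,\mathrm{diag}(\Sigma_t, I)\,\Delta$, and then read off $Q_t = \widehat Y_t\widehat X_t^{-1} \doteq \Xi(\Sigma_t)$. That is exactly the route the paper takes. (Small technicality: the residual block in the paper is literally the zero matrix, so its exponential is $I$; it is not a nontrivial nilpotent contributing polynomial-in-$t$ terms.)

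However, your invertibility argument has a genuine error. You assert that $Q_0 = \mu(M)$ is invertible by Assumption~\ref{ass:M}(1), but it is not: by \er{eq:mu},
\[
\mu(M) = \left[\ba{cc} M & -M \\ -M & M \ea\right] = \left[\ba{c} I \\ -I \ea\right] M \left[\ba{cc} I & -I \ea\right],
\]
which has rank exactly $n<2n$, regardless of $M$ being invertible. Relatedly, your proposal to reconstruct $\widehat\Sigma_t\in\R^{4n\times 4n}$ from the $4n\times 2n$ array $[\widehat X_t;\widehat Y_t]$ using only symplecticity is not sound: knowing how a $4n\times 4n$ symplectic matrix acts on a $2n$-dimensional subspace does not determine it (a dimension count already shows the remaining freedom). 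The resolution in the paper is more elementary and does not try to recover $\widehat\Sigma_t$ at all: $\widehat\Sigma_t$ is a priori a fixed function of $\Sigma_t$ via $\Delta$ and the block-diagonal embedding, so $\Xi$ is a map $\cSym^{2n\times 2n}\to\cSym^{2n\times 2n}$, and its invertibility is established by exhibiting an explicit $\Xi^{-1}$ in block form (\er{eq:inv-Xi}), which requires invertibility of $M$ (Assumption~\ref{ass:M}(1)) and of the block $Q^{21}$ (guaranteed along the trajectory by Assumption~\ref{ass:M}(2) through \er{eq:equal-escape}). To fix your proof, drop the $\mu(M)$-invertibility claim, drop the attempt to reconstruct $\widehat\Sigma_t$, and instead solve the block equations $Q=\Xi(\Sigma)$ for $\Sigma$ directly, obtaining $\Sigma^{11},\Sigma^{12},\Sigma^{21},\Sigma^{22}$ as explicit rational functions of the blocks of $Q$ and $M$.
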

\begin{proof}
See Appendix \ref{app:symplectic-and-Q}.
\end{proof}

Theorem \ref{thm:symplectic-and-Q} demonstrates that under the conditions of Assumption \ref{ass:M}, any particular solution of the DRE \er{eq:DRE}, \er{eq:IC-M} can be represented equivalently by the symplectic fundamental solution $\Sigma_t$ of \er{eq:symplectic-XY}, or via the Hessian $Q_t$ of the quadratic value function of the auxiliary optimal control problem \er{eq:value-S}, \er{eq:S}, see \er{eq:symplectic-and-Q}.
% The new max-plus fundamental solution developed here exploits this latter representation \cite{M:08,DM1:15,ZD1:15}. 
Consequently, the following sufficient condition for Assumption \ref{ass:M} is useful.

\begin{lemma}
\label{lem:ass-M-holds}
Suppose there exists a stabilizing solution $M_0\in\cSym_{\ge 0}$ of the {\em algebraic Riccati equation} (ARE)
\begin{align}
	0 & = A' M_0 + M_0 A + M_0 B B' M_0 + C'C.
	\label{eq:ARE}
\end{align}
Then, there always exists an invertible $M\in\cSym^{n\times n}$ satisfying 
\begin{align}
	& M - M_0\in\cSym_{<0}^{n\times n},
	\label{eq:ass-M-holds}
\end{align}
such that Assumption \ref{ass:M} holds.
\end{lemma}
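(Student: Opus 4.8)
The plan is to show that $M$ chosen slightly below a stabilizing ARE solution $M_0$ inherits the good properties that make Assumption \ref{ass:M} hold, the key point being that the DRE \er{eq:DRE} initialized at $M$ can be compared with the constant solution $M_0$. First I would recall that, since $M_0$ stabilizes \er{eq:ARE}, the matrix $A + B B' M_0$ is Hurwitz and $P_t \equiv M_0$ is a (global, nonescaping) equilibrium solution of DRE \er{eq:DRE}; moreover the difference $\Delta_t \doteq M_0 - P_t$ between this equilibrium and the solution $P_t$ started from $P_0 = M$ satisfies a linear-in-$\Delta$ evolution (a Lyapunov-type differential equation with the Hurwitz generator $A + B B' M_0$ plus a quadratic remainder $-\Delta_t B B' \Delta_t$), so if $\Delta_0 = M_0 - M \in \cSym_{>0}^{n\times n}$ is small then $\Delta_t$ stays bounded, in fact stays in $\cSym_{\ge 0}^{n\times n}$ and converges to $0$; this gives $P_t \le M_0$ for all $t$ and hence no finite escape, i.e. $t^*(M) = +\infty$, which is property 2) of Assumption \ref{ass:M}. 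Invertibility of $M$, property 1), is immediate: $M_0 \in \cSym_{\ge 0}^{n\times n}$, so for all sufficiently small $\eps > 0$ the matrix $M \doteq M_0 - \eps I$ is symmetric and has no zero eigenvalue (its spectrum is that of $M_0$ shifted down by $\eps$, and one can avoid the finitely many bad $\eps$), while simultaneously \er{eq:ass-M-holds} holds by construction.

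The steps, in order, would be: (i) invoke the stabilizing ARE solution to get $A_{M_0} \doteq A + B B' M_0$ Hurwitz and $M_0$ an equilibrium of \er{eq:DRE}; (ii) take $M = M_0 - \eps I$ for $\eps > 0$ small, ensuring $M$ invertible and $M - M_0 = -\eps I \in \cSym_{<0}^{n\times n}$, so \er{eq:ass-M-holds} holds; (iii) write the equation for $\Delta_t = M_0 - P_t$ with $P_0 = M$, namely $\dot\Delta_t = A_{M_0}' \Delta_t + \Delta_t A_{M_0} - \Delta_t B B' \Delta_t$ with $\Delta_0 = \eps I$; (iv) use a comparison/monotonicity argument for this Riccati-type equation — since the quadratic term $-\Delta B B' \Delta$ is $\le 0$, the solution is dominated by the linear Lyapunov equation $\dot\Lambda_t = A_{M_0}'\Lambda_t + \Lambda_t A_{M_0}$, $\Lambda_0 = \eps I$, whose solution $e^{A_{M_0}'t}(\eps I)e^{A_{M_0}t}$ is globally defined and bounded (indeed decaying); (v) conclude $0 \le \Delta_t \le \Lambda_t$ for all $t$ in the interval of existence, hence $\Delta_t$ cannot blow up, hence $t^*(M) = +\infty$, giving property 2). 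Finally assemble (ii) and (v) to conclude Assumption \ref{ass:M} holds for this $M$.

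The main obstacle I anticipate is step (iv)–(v): making the comparison rigorous requires the standard but slightly delicate monotonicity theory for matrix Riccati differential equations (that $\Delta_t \ge 0$ is preserved, and that a Riccati flow with nonpositive quadratic term is bounded above by the associated linear flow). One clean way to handle this is the representation $\Delta_t = e^{A_{M_0}'t}\,\Delta_0\,e^{A_{M_0}t} - \int_0^t e^{A_{M_0}'(t-s)}\,\Delta_s B B' \Delta_s\, e^{A_{M_0}(t-s)}\,ds$ obtained by variation of constants, from which $0 \le \Delta_t \le e^{A_{M_0}'t}\,\Delta_0\,e^{A_{M_0}t}$ follows by a bootstrap on the maximal interval of existence (the right-hand side is finite for all $t$, so the interval is all of $\R_{\ge 0}$). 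An alternative, if one prefers to avoid reproving Riccati comparison, is to appeal directly to the strict bounded real lemma: the existence of a stabilizing $M_0 \ge 0$ for \er{eq:ARE} is exactly the algebraic condition in \cite[Theorem 2.1]{PAJ:91} or \cite[Theorem 3.7.4]{GL:95} guaranteeing that the DRE from the zero (or any sufficiently small) initial condition has $t^* = +\infty$, and then a perturbation/continuity argument transfers this to $M = M_0 - \eps I$; I would likely present the self-contained variation-of-constants version and relegate the bounded-real-lemma remark to a sentence, since that is the form cited in the text preceding the lemma.
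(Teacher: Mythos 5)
Your proposal is correct, and the upper bound part of your argument — comparing the solution started at $M < M_0$ with the constant equilibrium solution $M_0$ via Riccati monotonicity to get $P_t < M_0$ — is exactly the first half of the paper's proof (which appeals to its own Lemma~\ref{lem:monotone} for this). Where you diverge is the lower bound. The paper works with $Q_t^{11}$ (the Hessian block of the auxiliary value function $S_t(\cdot,0)$), plugs in the suboptimal control $w\equiv 0$, and obtains the explicit lower bound $Q_t^{11}\ge O_t\doteq\int_0^t e^{A's}C'Ce^{As}\,ds+e^{A't}Me^{At}$, which is finite for all $t$; squeezed between $O_t$ and $M_0$, the solution cannot escape. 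You instead pass to $\Delta_t=M_0-P_t$, note that the transformed Riccati equation $\dot\Delta_t=A_{M_0}'\Delta_t+\Delta_t A_{M_0}-\Delta_t BB'\Delta_t$ has a nonpositive quadratic term, and use variation of constants to dominate $\Delta_t$ by the linear Lyapunov flow $e^{A_{M_0}'t}\Delta_0 e^{A_{M_0}t}$; together with $\Delta_t\ge 0$ (again monotonicity against $M_0$), this confines $P_t$. Both are sound. The paper's route stays inside the optimal-control/value-function machinery built earlier in the section, which is aesthetically consistent with the rest of the development, while yours is self-contained ODE comparison with no appeal to the payoff or the auxiliary value function. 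Two minor remarks on your version: the Hurwitz property of $A_{M_0}$ is not actually needed — only finiteness of $e^{A_{M_0}'t}\Delta_0 e^{A_{M_0}t}$ on finite horizons, which holds for any matrix — so the ``decaying'' observation, while true, is a bonus; and your specific choice $M=M_0-\eps I$ proves existence (which is all the lemma claims), whereas the paper's argument applies verbatim to any invertible $M$ with $M-M_0\in\cSym_{<0}^{n\times n}$, a slightly stronger conclusion that the later sections quietly rely on when choosing $M$.
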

\begin{proof}
See Appendix \ref{app:ass-M-holds}.
\end{proof}

\begin{remark}
Lemma \ref{lem:ass-M-holds} provides a constructive approach to validating Assumption \ref{ass:M} directly. It also enables indirect validation via the bounded and strict bounded real lemmas, see for example \cite{PAJ:91}.
% (Both assertions may be verified by inspection of the respective bounded real lemmas, see for example \cite{PAJ:91}, and Lemma \ref{lem:ass-M-holds}.)
In particular, stability of $A$, controllability of $(A,B)$, observability of $(C,A)$, and the finite gain property $\|(A,B,C)\|_{\Hinfty}\le1$ imply via the bounded real lemma that Assumption \ref{ass:M} holds. Alternatively, stability of $A$ and the strict gain property $\|(A,B,C)\|_{\Hinfty}<1$ imply via the strict bounded real lemma that Assumption \ref{ass:M} holds. 
\end{remark}

%%		Max-plus fundamental solution semigroup.

\newcommand{\cP}		{{\mathscr{P}}}

% \subsection{Max-plus fundamental solution semigroup for \er{eq:value-W}}

\subsection{Max-plus integral operator representations for \er{eq:op-S}}
% \label{ssec:DRE-fund}
A horizon indexed {\em max-plus linear max-plus integral operator} defined on a space $\cP$ is an operator of the form
\begin{align}
	& \op{F}_t^\oplus \, \pi
	\doteq \int_{\cP}^\oplus F_t(\cdot,\omega)\otimes \pi\circ\chi_t(\cdot, \omega)\, d\omega\,,
	\quad \pi\in\dom(\op{F}_t^\oplus),
	\nn\\
	& \dom(\op{F}_t^\oplus) \doteq \left\{ \pi:\cP\rightarrow\R^- \, \left| \ba{c}
											( \op{F}_t^\oplus \pi)(x)\in\R^-
											\\
											\forall \ x\in\R^n
										\ea \right. \right\},
	\label{eq:op-F}
\end{align}
where $F_t:\R^n\times\cP\rightarrow\R^-$ denotes the kernel of the operator, $\chi_t:\R^n\times\cP\rightarrow\R^n$ is an auxiliary operator (included here for generality), and $\pi\in\dom(\op{F}_t^\oplus)$ is the function-valued argument of $\op{F}_t^\oplus$ representing a terminal payoff (or value function) or its semiconvex dual. The dynamic programming evolution operator $\op{S}_t$ of \er{eq:op-S} defines a max-plus linear max-plus integral operator of this form, with 
\begin{align}
	& \cP \doteq \Ltwo([0,t];\R^m)\,,
	\nn\\
	& F_t(x,w) = I_t(x,w) \doteq \int_0^t \demi |y_s|^2 - \demi |w_s|^2 \, ds\,,
	\nn\\
	&
	\chi_t(x,w) \doteq x_t\,, 
	\nn
\end{align}
where  $I_t(x,w)$ is the integrated running payoff associated with initial state $x\in\R^n$ and input $w\in\Ltwo([0,t];\R^m)$ over the horizon $t\in\R_{\ge 0}$, and $x_t\in\R^n$ is the corresponding terminal state, both defined with respect to \er{eq:dynamics}. That is, for all $x\in\R^n$,
\begin{align}
	(\op{S}_t\, \psi)(x)
	& = \int_{\Ltwo([0,t];\R^m)}^\oplus I_t(x,w) \otimes \psi(x_t)\, dw\,.
	\label{eq:op-S-integral}
\end{align}
Similarly, recalling the definition \er{eq:delta} of the max-plus delta function $\delta^-$, the identity max-plus linear max-plus integral operator on $\cP\doteq\R^n$, defined via $\chi_t(x,y) \doteq y\in\R^n$, is
\begin{equation}
	(\op{I}^\oplus\, \psi)(x) \doteq \int_{\R^n}^\oplus \delta^-(x,y)\otimes \psi(y)\, dy\,,
	\label{eq:op-I}
\end{equation}
for all $x\in\R^n$, ie. $\op{I}^\oplus\psi = \psi$ for any $\psi\in\dom(\op{I}^\oplus)$, in which the domain $\dom(\op{I}^\oplus)$ is defined as per \er{eq:op-F}.
\begin{theorem}
\label{thm:op-G}
Under Assumption \ref{ass:M}, and given the dynamic programming evolution operator $\op{S}_t$ of \er{eq:op-S} with $t\in\R_{\ge 0}$ fixed, there exists a max-plus linear max-plus integral operator $\op{G}_t^\oplus$ of the form \er{eq:op-F} such that 
\begin{align}
	& \op{S}_t\, \psi = \op{G}_t^\oplus\, \psi
	\doteq \int_{\R^n}^\oplus G_t(\cdot,y)\otimes \psi(y)\, dy,
	\quad
	\forall \ \psi\in\dom(\op{G}_t^\oplus),
	\nn\\
	& \dom(\op{G}_t^\oplus) \doteq \dom(\op{S}_t),
	\label{eq:op-G}
\end{align}
with kernel $G_t:\R^n\times\R^n\rightarrow\R^-$ defined for all $x,y\in\R^n$ by
\begin{align} 
	& G_t(x,y) \doteq (\op{S}_t\, \delta^-(\cdot,y))(x) = (\op{D}_\varphi S_t(x,\cdot))(y),
	\label{eq:kernel-G-def}
\end{align}
with respect to \er{eq:delta}, \er{eq:op-D}, \er{eq:op-S}, \er{eq:value-S}.
\end{theorem}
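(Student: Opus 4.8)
The plan is to establish the two claimed representations in \er{eq:op-G}, \er{eq:kernel-G-def} by exploiting max-plus linearity of $\op{S}_t$ together with semiconvex duality. First I would record the key structural fact: for fixed $t\in\R_{\ge 0}$, the operator $\op{S}_t$ of \er{eq:op-S} is \emph{max-plus linear}, i.e. $\op{S}_t(c\otimes\psi) = c\otimes\op{S}_t\psi$ for any constant $c\in\R^-$, and $\op{S}_t(\psi_1\oplus\psi_2) = (\op{S}_t\psi_1)\oplus(\op{S}_t\psi_2)$, since the supremum defining $J_\psi$ in \er{eq:payoff} is additive in the terminal-payoff term and commutes with pointwise max. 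This follows directly from \er{eq:op-S}, \er{eq:payoff} by inspection. I would then observe that any $\psi\in\dom(\op{S}_t)$ admits the max-plus representation $\psi(y) = \int_{\R^n}^\oplus \delta^-(z,y)\otimes\psi(z)\,dz$ coming from \er{eq:op-I}.

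The main step is to push $\op{S}_t$ through this max-plus integral. Applying $\op{S}_t$ to $\psi = \op{I}^\oplus\psi$ and using max-plus linearity (now in its integral form, i.e. continuity of $\op{S}_t$ with respect to max-plus integration, which is the genuinely load-bearing claim and the place where care is needed), one obtains $(\op{S}_t\psi)(x) = \int_{\R^n}^\oplus (\op{S}_t\,\delta^-(\cdot,y))(x)\otimes\psi(y)\,dy$, which is exactly \er{eq:op-G} with $G_t(x,y)\doteq(\op{S}_t\,\delta^-(\cdot,y))(x)$. Justifying the interchange of $\op{S}_t$ with $\int^\oplus$ is the crux: one direction ($\ge$) is immediate by choosing, for each fixed $y$, the control that is near-optimal for the payoff $\delta^-(\cdot,y)$ and noting its contribution is dominated by the supremum over $y$; the reverse direction ($\le$) requires that an optimal (or near-optimal) control for terminal payoff $\psi$ drives the state $x_t$ to some particular value $y^\star$, so that $J_\psi(t,x,w) = I_t(x,w)+\psi(y^\star) = I_t(x,w)+\delta^-(y^\star,y^\star)\otimes\psi(y^\star)$, whence the right-hand side with $y=y^\star$ already attains it. Here the quadratic structure \er{eq:W}, \er{eq:dynamics} and the growth/coercivity implied by Assumption \ref{ass:M} (via $t^*(M)=+\infty$, ensuring $W_t$ is finite and the supremum in \er{eq:op-S} is attained) guarantee the needed attainment; this is where I expect the real work to lie, and I would either invoke the known well-posedness from \cite{M:08,DM1:15,ZD1:15} or give a short direct argument using finiteness of \er{eq:W}.

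Finally I would verify the second equality in \er{eq:kernel-G-def}, namely $G_t(x,y) = (\op{D}_\varphi S_t(x,\cdot))(y)$. Starting from $S_t(x,z) = (\op{S}_t\,\varphi(\cdot,z))(x)$ of \er{eq:value-S} and using the integral representation just established, $S_t(x,z) = \int_{\R^n}^\oplus G_t(x,y)\otimes\varphi(y,z)\,dy$. By \er{eq:basis} $\varphi(y,z)=\varphi(z,y)$, so this reads $S_t(x,\cdot) = \op{D}_\varphi^{-1}(G_t(x,\cdot))$ in the notation of \er{eq:op-D-inv}, provided $G_t(x,\cdot)\in\dom(\op{D}_\varphi^{-1}) = \cSave{-M}$. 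That membership follows from quadraticity: $S_t(x,\cdot)$ is quadratic by \er{eq:S}, and by \er{eq:op-D}, \er{eq:Upsilon-inv} applying $\op{D}_\varphi$ to a quadratic in $\cSvex{-M}$ lands in $\cSave{-M}$; one checks the relevant Hessian condition on $Q_t^{22}$ (equivalently that $S_t(x,\cdot)+\demi\langle\cdot,M\cdot\rangle$ has the right concavity), which holds under Assumption \ref{ass:M} by \er{eq:equal-escape} and the explicit form \er{eq:Q-22}. Then applying the invertible transform $\op{D}_\varphi$ to both sides of $S_t(x,\cdot)=\op{D}_\varphi^{-1}(G_t(x,\cdot))$ gives $G_t(x,\cdot) = \op{D}_\varphi(S_t(x,\cdot))$, i.e. \er{eq:kernel-G-def}, completing the proof. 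The domain bookkeeping $\dom(\op{G}_t^\oplus)=\dom(\op{S}_t)$ in \er{eq:op-G} is then immediate from the construction, since the two operators agree pointwise wherever either side is finite.
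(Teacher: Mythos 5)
Your proposal is correct and follows essentially the same route as the paper: insert the identity $\op{I}^\oplus$, interchange the resulting suprema (which is the paper's ``interchange of max-plus integrals''), and link $T_t(x,\cdot)\doteq(\op{S}_t\,\delta^-(\cdot,\cdot))(x)$ to $\op{D}_\varphi S_t(x,\cdot)$ via semiconvex duality, with the domain check reducing to $Q_t^{22}\in\cSym_{\ge M}^{n\times n}$ from \er{eq:Q-22} and Assumption~\ref{ass:M}. The one place where the paper's ordering is cleaner is that it first sets $G_t(x,\cdot)\doteq\op{D}_\varphi S_t(x,\cdot)\in\cSave{-M}$ (so $\R^-$-valuedness of the kernel is automatic) and only afterwards identifies this with $T_t(x,\cdot)$, whereas you define $G_t$ via $T_t$ and recover the $\cSave{-M}$ membership at the end---which, as written, leans on the identity $G_t=\op{D}_\varphi S_t(x,\cdot)$ you are still establishing; you should either front-load the finiteness check on $T_t$ (for instance via $T_t(x,y)\le(\op{S}_t\,0)(x)$, cf.\ Remark~\ref{rem:another-kernel-bound}, together with the exclusion of $+\infty$ coming from finiteness of $S_t=\op{D}_\varphi^{-1}T_t$) or adopt the paper's ordering; your concern about attainment in the interchange step is also unnecessary, since that step is a pure reorganization of a joint supremum over $(w,y)$ constrained by $x_t(w)=y$, not a limit interchange requiring compactness.
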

\vspace{2mm}
\begin{proof}
Fix arbitrary $t\in\R_{\ge 0}$ and $x,y\in\R^n$. Recalling the definition \er{eq:value-S}, \er{eq:S} of $S_t$,
\begin{align}
	& S_t(x,y)
	= (\op{S}_t\, \varphi(\cdot,y))(x) 
	= \demi x' Q_t^{11} x + x' Q_t^{12} y + \demi y' Q_t^{22} y,
	\nn
\end{align}
wherein Assumption \ref{ass:M} and \er{eq:Q-22} imply that
\begin{align}
	& Q_t^{22}\in\cSym_{\ge M}^{n\times n}.
	\label{eq:Q-22-ge-M}
\end{align}
Consequently, by definition \er{eq:op-D} of the semiconvex transform, $S_t(x,\cdot)\in\dom(\op{D}_\varphi) = \cSvex{-M}$, so that
\begin{align}
	& G_t(x,\cdot) \doteq \op{D}_\varphi S_t(x,\cdot)\in\cSave{-M} = \dom(\op{D}_\varphi^{-1})
	\label{eq:dual-S}
\end{align}
is well-defined. Note in particular that $G_t(x,y)\in\R^-$ by definition \er{eq:semiconvex} of $\cSave{-M}$. As $t\in\R_{\ge 0}$ and $x,y\in\R^n$ are arbitrary, a max-plus linear max-plus integral operator $\op{G}_t^\oplus$ of the form \er{eq:op-F} is well-defined by the kernel $G_t$ of \er{eq:dual-S}. Recalling the definitions \er{eq:op-D-inv}, \er{eq:value-S}, \er{eq:op-I} of $\op{D}_\varphi^{-1}$, $S_t$, $\op{I}^\oplus$,
\begin{align}
	& S_t(x,y) = (\op{S}_t\, \varphi(\cdot,y))(x)
	= (\op{S}_t\, \op{I}^\oplus\, \varphi(\cdot,y))(x)
	\nn\\
	& = \int_{\Ltwo([0,t];\R^m)}^\oplus I_t(x,w) \otimes \left[ \int_{\R^n}^\oplus \delta^-(x_t,\xi)\otimes \varphi(\xi,y) \, d\xi \right] dw
	\nn\\
	& = \int_{\R^n}^\oplus \left[ \int_{\Ltwo([0,t];\R^m)}^\oplus I_t(x,w) \otimes  \delta^-(x_t,\xi)\, dw \right] \otimes \varphi(\xi,y) \, d\xi
	\nn\\
	& = \int_{\R^n}^\oplus (\op{S}_t\, \delta^-(\cdot,\xi))(x) \otimes \varphi(\xi,y)\, d\xi
	\nn\\
	& =  \int_{\R^n}^\oplus \varphi(y,\xi) \otimes (\op{S}_t\, \delta^-(\cdot,\xi))(x)\, d\xi % \underbrace{}_{\doteq T_t(x,\xi)} \, d\xi
	\nn\\
	& = (\op{D}_\varphi^{-1} T_t(x,\cdot))(y)
	\label{eq:S-and-T}
\end{align}
where the interchange of max-plus integrals involved corresponds to an interchange of suprema, the second last equality follows by symmetry of $\varphi$, ie. $\varphi(\xi,y) = \varphi(y,\xi)$, and $T_t(x,y) \doteq (\op{S}_t\, \delta^-(\cdot,y))(x)$. Hence, substituting \er{eq:S-and-T} in \er{eq:dual-S},
\begin{align}
	& G_t(x,\cdot)
	= \op{D}_\varphi S_t(x,\cdot) = \op{D}_\varphi \op{D}_\varphi^{-1} T_t(x,\cdot) = T_t(x,\cdot). 
	\nn
\end{align}
That is, \er{eq:kernel-G-def} holds. Furthermore, for any $\psi\in\dom(\op{S}_t)$, a similar argument yields
\begin{align}
	& (\op{S}_t\, \psi)(x)
	= (\op{S}_t\, \op{I}^\oplus\, \psi)(x)
	\nn\\
	& = \int_{\Ltwo([0,t];\R^m)}^\oplus I_t(x,w) \otimes \left[ \int_{\R^n}^\oplus \delta^-(x_t,y)\otimes \psi(y) \, dy \right] dw
	\nn\\
	& = \int_{\R^n}^\oplus \left[ \int_{\Ltwo([0,t];\R^m)}^\oplus I_t(x,w) \otimes  \delta^-(x_t,y)\, dw \right] \otimes \psi(y) \, dy
	\nn\\
	& = \int_{\R^n}^\oplus (\op{S}_t\, \delta^-(\cdot,y))(x) \otimes \psi(y)\, dy = \int_{\R^n}^\oplus G_t(x,y) \otimes \psi(y)\, dy
	\nn\\
	& = (\op{G}_t^\oplus\, \psi)(x)\,.
	\nn
\end{align}
That is, \er{eq:op-G} holds.
\end{proof}

\begin{remark}
\label{rem:another-kernel-bound}
The kernel $G_t$ of the max-plus linear max-plus integral operator $\op{G}_t^\oplus$ defined in Theorem \ref{thm:op-G} can be bounded above by the value function of a third optimal control problem. In particular, applying \er{eq:kernel-G-def},
\begin{align}
	& G_t(x,y)
	= (\op{S}_t\, \delta^-(\cdot,y)) \le (\op{S}_t\, \psi_0)(x)
	\nn
\end{align}
for all $t\in\R_{\ge 0}$, $x,y\in\R^n$, where $\psi_0:\R^n\rightarrow\R$ is the zero terminal payoff defined by $\psi_0(x) \doteq 0$ for all $x\in\R^n$. By inspection of \er{eq:op-S}, $\op{S}_t\, \psi_0$ is the value function of a standard optimal control problem arising in {\ltwo}-gain analysis. It is finite valued if there exists a stabilizing solution of ARE \er{eq:ARE}. 
\end{remark}

In developing a max-plus fundamental solution for DRE \er{eq:DRE}, \er{eq:IC-M} via Theorem \ref{thm:op-G}, it is useful to establish a connection between finiteness of the kernel $G_t$ of \er{eq:kernel-G-def} and controllability of the underlying dynamics \er{eq:dynamics}.

\begin{assumption}
\label{ass:controllable}
$(A,B)$ of \er{eq:dynamics} is controllable.
\end{assumption}

\begin{lemma}
\label{lem:kernel-G-finite}
Suppose Assumption \ref{ass:M} holds. Then, the kernel $G_t$ of the max-plus linear max-plus integral operator $\op{G}_t^\oplus$ defined by \er{eq:op-G} satisfies the following property:
\begin{align}
	& \ba{c}
	% \text{Dynamics \er{eq:dynamics}}
	% \\
	% \text{are controllable}
	\text{Assumption \ref{ass:controllable} holds}
	% \\
	% \text{holds}
	\ea
	\
	\Longleftrightarrow
	\
	% \left\{
	\ba{c}
	G_t(x,y)\in\R
	\\
	\forall\ t\in\R_{>0}, \, x,y\in\R^n
	\ea
	% \right.
	\nn
\end{align}
\end{lemma}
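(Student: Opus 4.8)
The plan is to unwind the variational definition of the kernel $G_t$ so that its (real-)finiteness reduces to a reachability question for the linear dynamics \er{eq:dynamics}, and then to invoke the classical controllability characterisation of linear time-invariant systems.

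First I would rewrite $G_t$ explicitly. Fixing $t\in\R_{>0}$ and $x,y\in\R^n$ and combining \er{eq:kernel-G-def}, \er{eq:op-S}, \er{eq:payoff} with the definition \er{eq:delta} of $\delta^-$ --- noting that $\delta^-(x_t,y)$ penalises by $-\infty$ exactly those inputs failing to steer \er{eq:dynamics} from $x_0=x$ to $x_t=y$ --- one obtains
\begin{align}
	G_t(x,y)
	& = \sup_{w\in\cW_t(x,y)} \int_0^t \demi|y_s|^2 - \demi|w_s|^2\,ds\,,
	\label{eq:G-var}
\end{align}
where $\cW_t(x,y) \doteq \{w\in\Ltwo([0,t];\R^m)\mid x_t=y\}$ is the set of inputs steering \er{eq:dynamics} from $x_0=x$ to $x_t=y$, and the supremum over $\cW_t(x,y)=\emptyset$ is $-\infty$ by convention.

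Second, I would observe that the upper bound comes for free under Assumption \ref{ass:M}: the argument in the proof of Theorem \ref{thm:op-G}, see \er{eq:dual-S}, already shows $G_t(x,\cdot)=\op{D}_\varphi S_t(x,\cdot)\in\cSave{-M}$, hence $G_t(x,y)\in\R^-$, i.e. $G_t(x,y)<+\infty$, for all $t\in\R_{>0}$ and $x,y\in\R^n$ (the same bound also follows from Remark \ref{rem:another-kernel-bound}). So $G_t(x,y)\in\R$ if and only if $G_t(x,y)>-\infty$, which by \er{eq:G-var} holds if and only if $\cW_t(x,y)\ne\emptyset$: if some $w\in\cW_t(x,y)$ exists then $s\mapsto x_s$ solving \er{eq:dynamics} is continuous, hence bounded on $[0,t]$, so both terms of the integrand in \er{eq:G-var} are integrable and $G_t(x,y)\ge\int_0^t\demi|y_s|^2-\demi|w_s|^2\,ds\in\R$; conversely $\cW_t(x,y)=\emptyset$ forces $G_t(x,y)=-\infty$.

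Finally, I would close the loop with linear systems theory: since $x_t=e^{At}x+\int_0^t e^{A(t-s)}Bw_s\,ds$ for \er{eq:dynamics}, non-emptiness of $\cW_t(x,y)$ for every $x,y\in\R^n$ is equivalent to surjectivity onto $\R^n$ of the reachability map $w\mapsto\int_0^t e^{A(t-s)}Bw_s\,ds$; by invertibility of the controllability Gramian $\int_0^t e^{As}BB'e^{A's}\,ds$, equivalently the Kalman rank condition on $[\,B\ AB\ \cdots\ A^{n-1}B\,]$, this holds for some $t\in\R_{>0}$ if and only if it holds for all $t\in\R_{>0}$, and precisely when $(A,B)$ is controllable, i.e. Assumption \ref{ass:controllable} holds. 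Chaining these equivalences gives the stated result. I do not foresee a genuine obstacle: the only points requiring care are the reduction in the first step (faithfully converting the max-plus $\delta^-$ penalty into a hard terminal constraint, and fixing the empty-supremum convention) and, for the reverse implication, using that the hypothesis quantifies over all $x,y\in\R^n$ so that it genuinely delivers surjectivity of the reachability map rather than reachability of a single target.
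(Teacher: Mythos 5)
Your proof is correct and the necessity direction matches the paper's almost verbatim: both appeal to the value-function interpretation of $G_t(x,y)$ and note that non-reachability of $y$ from $x$ in time $t$ forces $G_t(x,y)=-\infty$. For sufficiency, however, you take a genuinely different and more elementary route. You combine the upper bound $G_t(x,y)\in\R^-$ (already furnished by Assumption \ref{ass:M} via $G_t(x,\cdot)\in\cSave{-M}$, or by Remark \ref{rem:another-kernel-bound}) with the lower bound obtained by evaluating the variational representation of $G_t$ at any admissible steering input $w\in\cW_t(x,y)$, whose existence is guaranteed by the classical controllability/Gramian characterisation of \er{eq:dynamics}. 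The paper instead argues through Lemma \ref{lem:controllable-Q22}: it shows controllability of $(A,B)$ forces $Q_t^{22}\in\cSym_{>M}^{n\times n}$ (via the Gramian of the closed-loop pair and invariance of controllability under state feedback), which makes $(M-Q_t^{22})^{-1}$ exist, and then computes $\op{D}_\varphi S_t(x,\cdot)$ explicitly to land on the finite quadratic form \er{eq:G-and-Lambda}. Your route is shorter and conceptually cleaner for this lemma in isolation; the paper's lengthier computation is doing double duty, since the explicit Hessian $\Lambda_t=\Pi^{-1}(Q_t)$ extracted there is precisely what Theorem \ref{thm:Lambda-and-Q} relies on. If you intend to prove Theorem \ref{thm:Lambda-and-Q} along the paper's lines afterwards, you would still need to carry out that semiconvex-transform calculation; otherwise your argument is a complete and correct alternative.
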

\begin{proof}
See Appendix \ref{app:kernel-G-finite}.
\end{proof}

%%		Max-plus fundamental solution for the DRE.

\subsection{Max-plus fundamental solution for DRE \er{eq:DRE}}
\label{ssec:DRE-fund}

Dynamic programming implies that the set of dynamic programming evolution operators $\{\op{S}_t\}_{t\in\R_{\ge 0}}$ defines the well-known Lax-Oleinik dynamic programming semigroup \cite{KM:97}. Applying Theorem \ref{thm:op-G}, it immediately follows that $\{\op{G}_t^\oplus\}_{t\in\R_{\ge 0}}$ must also define a one-parameter semigroup of operators via \er{eq:op-G}. In particular, $\{\op{G}_t^\oplus\}_{t\in\R_{\ge 0}}$ naturally inherits (from the Lax-Oleinik semigroup) the semigroup and identity properties
\begin{equation}
	\begin{gathered}
	\op{G}_t^\oplus\, \op{G}_\tau^\oplus = \op{G}_{t+\tau}^\oplus\,,
	\quad \op{G}_0^\oplus = \op{I}^\oplus\,,
	\end{gathered}
	\label{eq:op-G-semigroup}
\end{equation}
for $t,\tau\in\R_{\ge 0}$. This particular semigroup is referred to as the {\em max-plus primal space fundamental solution semigroup} for the optimal control problem \er{eq:value-W}, see \cite{ZD1:15,DMZ1:15}. The modifier {\em primal} used here refers to the fact that propagation occurs in the primal space of payoffs. (A corresponding {\em max-plus dual space fundamental solution semigroup} also exists, where propagation occurs in a dual space defined by the semiconvex transform \er{eq:op-D}, see for example \cite{M:08,DM1:15,ZD1:15,DMZ1:15,ZD2:15}.) 

In the specific case of the optimal control problem defined by \er{eq:value-W}, the properties \er{eq:op-G} and \er{eq:op-G-semigroup} may be used to directly propagate the value function $W_t$ to longer time horizons, with
\begin{align}
	& W_{t+\tau} = \op{G}_\tau^\oplus \, W_t\,,
	\qquad
	W_t = \op{G}_t^\oplus\, \psi
	\label{eq:op-G-DPP}
\end{align}
for any $t,\tau, t+\tau\in[0,t^*(P_0))$. In view of \er{eq:W} and \er{eq:op-G-DPP}, a particular solution $P_t$ of DRE \er{eq:DRE} satisfying the initial condition \er{eq:IC-M} can be similarly propagated forward in time. This gives rise to a characterization of $P_t$ in terms of the Hessian of the kernel $G_t$ of the max-plus primal-space fundamental solution $\op{G}_t^\oplus$. This characterization is referred to as a {\em max-plus primal space fundamental solution for DRE \er{eq:DRE}}. 

\if{false}

?? HERE ??

By definition \er{eq:kernel-G-def}, the kernel $G_t$ in Theorem \ref{thm:op-G} describes the value function $G_t(\cdot,y):\R^n\rightarrow\R^\pm$ of an optimal two-point boundary value problem, for any $t\in\R_{\ge 0}$, $y\in\R^n$ fixed. In this interpretation, $G_t(x,y)$ describes the maximal payoff \er{eq:payoff} achievable for dynamics \er{eq:dynamics}, subject to the boundary conditions $x(0) = x\in\R^n$ and $x(t) = y\in\R^n$. In general, this maximal payoff can be finite, or $\pm\infty$. % The following corollary excludes the $+\infty$ case.
However, Assumption \ref{ass:M} ensures that the value function of the auxiliary optimal control problem \er{eq:value-S} satisfies $S_t(x,\cdot)\in\cSave{M}$ for all $t\in\R_{\ge 0}$, $x\in\R^n$, by excluding the possibility of finite escape of the particular DRE solution \er{eq:Q-DRE}, \er{eq:Q-IC}. Consequently, the right-hand equality in \er{eq:kernel-G-def} provided by Theorem \ref{thm:op-G} implies, via the definition \er{eq:op-D} of the semiconvex transform, that $G_t(x,\cdot)\in\cSave{-M}$ for all $t\in\R_{\ge 0}$, $x\in\R^n$. Hence, by definition \er{eq:semiconvex} of $\cSave{-M}$,  payoff $G_t(x,y)$ can never take the value $+\infty$ for any $t\in\R_{\ge 0}$, $x,y\in\R^n$.

??

It is possible to further restrict the range of the kernel $G_t$ via a controllability assumption.  

\fi
%%

% \begin{proof}
% See Appendix \ref{app:controllable-Q22}.
% \end{proof}

% Underlying both of these assertions is the following sufficient condition for Assumption \ref{ass:M}.

%  In terms of the optimal control problem with value function $W_t$ defined by \er{eq:value-W}, and equivalently represented via \er{eq:op-G}, this means that the final state can never be achieved at $y$, as the max-plus integral (supremum) in \er{eq:op-G} will never be achieved there. 

\begin{theorem}
\label{thm:Lambda-and-Q}
Under Assumptions \ref{ass:M} and \ref{ass:controllable}, there exists a bijection $\Pi:\cSym^{2n\times 2n}\rightarrow\cSym^{2n\times 2n}$ such that  the kernel $G_t$ of \er{eq:kernel-G-def} takes the explicit finite quadratic form
\begin{align}
	& G_t(x,y)
	= \demi \left[ \ba{c}
		x \\ y
	\ea \right]' \Lambda_t \left[ \ba{c}
		x \\ y
	\ea \right]\in\R, \ \ 
	\Lambda_t \doteq \Pi^{-1}(Q_t)\,,
	\label{eq:kernel-G-and-Q}
\end{align}
for all $x,y\in\R^n$, $t\in\R_{>0}$, where $Q_t$ is as per \er{eq:Q-DRE}.
\end{theorem}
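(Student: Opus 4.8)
The plan is to evaluate the right-hand side of \er{eq:kernel-G-def} in closed form, exploiting that $S_t(x,\cdot)$ is quadratic with Hessian assembled from the blocks of $Q_t$. Fix $t\in\R_{>0}$ and $x\in\R^n$. By \er{eq:S} and the notation \er{eq:block}, $S_t(x,z)=\demi x'Q_t^{11}x+x'Q_t^{12}z+\demi z'Q_t^{22}z$, so Theorem \ref{thm:op-G} together with the definitions \er{eq:op-D}, \er{eq:basis} of the semiconvex transform and the basis $\varphi$ gives
\[
	G_t(x,y)=-\sup_{z\in\R^n}\left\{\demi(z-y)'M(z-y)-S_t(x,z)\right\}\,.
\]
Expanding, the maximand is the quadratic $\demi z'(M-Q_t^{22})z-z'(My+(Q_t^{12})'x)+\demi y'My-\demi x'Q_t^{11}x$, a concave function of $z$ by \er{eq:Q-22-ge-M}.

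The first substantive step is to upgrade $Q_t^{22}\ge M$ to $Q_t^{22}\in\cSym_{>M}^{n\times n}$ for every $t>0$; this is the only point at which Assumption \ref{ass:controllable} is used. Lemma \ref{lem:kernel-G-finite} shows that under that assumption $G_t(x,y)\in\R$ for all $x,y\in\R^n$, so the supremum above is finite for every value of the affine coefficient $My+(Q_t^{12})'x$; since $M$ is invertible (Assumption \ref{ass:M}) this coefficient sweeps out all of $\R^n$ as $(x,y)$ does, and a concave quadratic on $\R^n$ whose supremum is finite under every linear perturbation must have a negative-definite Hessian. Hence $M-Q_t^{22}\in\cSym_{<0}^{n\times n}$, i.e. $Q_t^{22}\in\cSym_{>M}^{n\times n}=\dom(\Upsilon)$.

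With $M-Q_t^{22}$ negative definite the supremum in $z$ is attained; writing the maximand as a quadratic form in $(x,y,z)\in\R^{3n}$ with symmetric coefficient matrix $\tilde N_t$, maximizing out the $z$-coordinates replaces $\tilde N_t$ by the Schur complement of its $z$-block, and a short computation yields $G_t(x,y)=\demi\left[\ba{c}x\\y\ea\right]'\Lambda_t\left[\ba{c}x\\y\ea\right]\in\R$ with
\[
	\Lambda_t=\left[\ba{cc}Q_t^{11}&0\\0&-M\ea\right]-\left[\ba{c}Q_t^{12}\\M\ea\right](Q_t^{22}-M)^{-1}\left[\ba{cc}Q_t^{21}&M\ea\right]\in\cSym^{2n\times 2n}\,,
\]
whose lower-right block is exactly $\Upsilon(Q_t^{22})$, cf. \er{eq:Upsilon}. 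This exhibits $\Lambda_t=\Pi^{-1}(Q_t)$, where $\Pi^{-1}$ is the map $Q\mapsto\Lambda$ defined by the displayed formula on $\{Q\in\cSym^{2n\times 2n}:Q^{22}\in\cSym_{>M}^{n\times n}\}$ (which contains $Q_t$ for every $t>0$) and extended to all of $\cSym^{2n\times 2n}$ via Moore--Penrose pseudo-inverses as in Remark \ref{rem:pseudo}.

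It remains to verify that $\Pi^{-1}$ is a bijection of $\cSym^{2n\times 2n}$, which I would do by writing its inverse $\Pi$ explicitly: from $\Lambda^{22}=\Upsilon(Q^{22})\in\cSym_{<-M}^{n\times n}=\dom(\Upsilon^{-1})$ recover $Q^{22}=\Upsilon^{-1}(\Lambda^{22})$ via \er{eq:Upsilon-inv}; then $Q^{12}=-\Lambda^{12}M^{-1}(Q^{22}-M)$, using invertibility of $M$ and of $Q^{22}-M$; and finally $Q^{11}=\Lambda^{11}+Q^{12}(Q^{22}-M)^{-1}Q^{21}$. I expect the main obstacle to be precisely this bookkeeping --- certifying that $\Pi$ and $\Pi^{-1}$ are genuine mutual inverses over all of $\cSym^{2n\times 2n}$ rather than merely over the natural domains $\{Q^{22}>M\}$ and $\{\Lambda^{22}<-M\}$, which is where the pseudo-inverse extension of Remark \ref{rem:pseudo} enters --- whereas, once $Q_t^{22}>M$ is in hand, evaluation of the partial semiconvex transform is routine linear algebra.
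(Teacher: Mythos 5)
Your proposal is correct, and its core — evaluating the partial semiconvex transform of the quadratic $S_t(x,\cdot)$ as a Schur complement in the $z$-block — is exactly the paper's computation (carried out in \er{eq:pre-G-and-Lambda} inside the sufficiency half of the proof of Lemma \ref{lem:kernel-G-finite}, to which the paper's proof of the theorem simply points). Your closed form for $\Lambda_t$ agrees block-by-block with the paper's $\Pi^{-1}$ in \er{eq:inv-Pi}, and your reconstruction $Q^{22}=\Upsilon^{-1}(\Lambda^{22})$, $Q^{12}=-\Lambda^{12}M^{-1}(Q^{22}-M)$, $Q^{11}=\Lambda^{11}+Q^{12}(Q^{22}-M)^{-1}Q^{21}$ reproduces the paper's $\Pi$ in \er{eq:Pi}.

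The one place you diverge is in upgrading $Q_t^{22}\in\cSym_{\ge M}^{n\times n}$ to $Q_t^{22}\in\cSym_{>M}^{n\times n}$. The paper gets this directly from controllability via Lemma \ref{lem:controllable-Q22}, which shows $Q_t^{22}-M=M\,\op{C}_t\,M$ with $\op{C}_t$ the (positive definite) controllability Gramian of the feedback-modified pair. You instead infer it backwards from the finiteness conclusion of Lemma \ref{lem:kernel-G-finite}: a concave quadratic in $z$ with finite supremum under every linear perturbation must have negative definite Hessian, and invertibility of $M$ lets $My+(Q_t^{12})'x$ sweep all of $\R^n$. That argument is sound and not circular as a matter of citation (the lemma is stated and proved independently of the theorem), but it is a detour: the paper's proof of the very implication you invoke proceeds by first establishing $Q_t^{22}>M$ and then performing your computation, so citing Lemma \ref{lem:controllable-Q22} directly is the cleaner and more self-contained route. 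Finally, your worry about whether $\Pi$ is a genuine bijection of all of $\cSym^{2n\times 2n}$ rather than only between $\{Q:Q^{22}\in\cSym_{>M}^{n\times n}\}$ and $\{\Lambda:\Lambda^{22}\in\cSym_{<-M}^{n\times n}\}$ is legitimate, but the paper itself only verifies $\Pi\circ\Pi^{-1}=\mathrm{id}$ on those restricted domains (which is all the theorem actually uses, since $Q_t$ lies in $\dom(\Pi^{-1})$ for every $t>0$); so this is a shared looseness of the statement, not a gap in your argument.
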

\begin{proof}
See Appendix \ref{app:Lambda-and-Q}.
\end{proof}

By inspection of Theorems \ref{thm:symplectic-and-Q} and \ref{thm:Lambda-and-Q}, the controllability Assumption \ref{ass:controllable} implies that the symplectic fundamental solution $\Sigma_t$ and the Hessian $\Lambda_t$ of the max-plus primal space fundamental solution kernel $G_t$ are equivalent. In particular, there exists a bijection $\Pi^{-1}\circ\Xi:\R^{2n\times 2n}\rightarrow\cSym^{2n\times 2n}$ such that
\begin{align}
	\Lambda_t 
	& = \Pi^{-1}\circ\Xi(\Sigma_t)\,,
	\qquad
	\Sigma_t = \Xi^{-1}\circ\Pi(\Lambda_t)
	\label{eq:Lambda-and-Sigma}
\end{align}
for all $t\in\R_{>0}$. Consequently, it is natural to expect that $\Lambda_t$ defines an alternative fundamental solution for DRE \er{eq:DRE}, \er{eq:IC-M}.

\begin{theorem}
\label{thm:P-and-Lambda}
Suppose that Assumptions \ref{ass:M} and \ref{ass:controllable} hold. Given any $P_0\in\cSym_{>M}^{n\times n}$, the corresponding unique solution $P_t$ of DRE \er{eq:DRE}, \er{eq:IC-M} exists and is given explicitly by
\begin{align}
	P_t
	& = % \left\{ \ba{cl}
	%		\widetilde M, & t = 0,
	%		\\
			\Lambda_t^{11} - \Lambda_t^{12}\, (P_0 + \Lambda_t^{22})^{-1} (\Lambda_t^{12})',
	%		& t\in(0,t^*(\widetilde M)),
	% \ea \right.
	\label{eq:P-and-Lambda}
\end{align}
for all $t\in(0,t^*(P_0))$, where 
% the Hessian 
$\Lambda_t\in\cSym^{2n\times 2n}$
% of the kernel $G_t$
is as per \er{eq:kernel-G-and-Q}, % is well-defined for all $t\in\R_{>0}$. 
and the maximal horizon of existence is $t^*(P_0)\in\R_{>0}^+$ is % given (equivalently) by
\begin{align}
	t^*(P_0)
	& = \sup\left\{ t\in\R_{>0} \left| \, P_0 + \Lambda_t^{22}\in\cSym_{<0}^{n\times n} \right. \right\}. % \in \R_{>0}^+.
	\label{eq:max-plus-escape}
\end{align}
\end{theorem}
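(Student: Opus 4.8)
The plan is to evaluate the value function $W_t$ of \er{eq:value-W} explicitly using the max-plus primal space fundamental solution kernel $G_t$ of Theorems \ref{thm:op-G} and \ref{thm:Lambda-and-Q}, and then to identify $W_t$ with $\demi x' P_t x$ as in \er{eq:W}. Fix $t\in\R_{>0}$. Combining the representation \er{eq:op-G} with the explicit quadratic form \er{eq:kernel-G-and-Q} of $G_t$ and the terminal payoff $\Psi(x) = \demi x' P_0 x$, and using the symmetry $\Lambda_t^{21} = (\Lambda_t^{12})'$ of $\Lambda_t\in\cSym^{2n\times 2n}$, gives
\begin{align}
	W_t(x)
	& = (\op{G}_t^\oplus\, \Psi)(x)
	= \demi x'\Lambda_t^{11} x + \sup_{y\in\R^n} \left\{ x'\Lambda_t^{12}\, y + \demi\, y'(P_0 + \Lambda_t^{22})\, y \right\}
	\label{eq:Wsup}
\end{align}
for all $x\in\R^n$, with the supremum possibly $+\infty$. (The use of \er{eq:op-G} for a payoff $\Psi$ that need not lie in $\dom(\op{G}_t^\oplus)$ is legitimate, as $G_t$ is finite valued by Theorem \ref{thm:Lambda-and-Q} and the interchange of suprema in the proof of Theorem \ref{thm:op-G} does not rely on finiteness.)

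First I would treat the case $P_0 + \Lambda_t^{22}\in\cSym_{<0}^{n\times n}$. Then $y\mapsto x'\Lambda_t^{12}y + \demi y'(P_0+\Lambda_t^{22})y$ is a strictly concave quadratic, so the supremum in \er{eq:Wsup} is finite and attained at $y^* = -(P_0+\Lambda_t^{22})^{-1}(\Lambda_t^{12})'x$; substituting and completing the square reduces \er{eq:Wsup} to the finite quadratic $W_t(x) = \demi x'[\Lambda_t^{11} - \Lambda_t^{12}(P_0+\Lambda_t^{22})^{-1}(\Lambda_t^{12})']x$. Since an {\ltwo}-gain value function that is finite valued for every initial state is necessarily a finite quadratic whose Hessian solves DRE \er{eq:DRE} subject to \er{eq:IC-M} (see \er{eq:W} and \cite{AM:71,M:08,DM1:15}), comparing Hessians gives both the existence of $P_t$ and the formula \er{eq:P-and-Lambda}; uniqueness of the DRE solution (Banach's fixed point theorem, cf. Section \ref{sec:symplectic}) then pins $P_t$ down.

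Next I would establish the converse: if $P_0 + \Lambda_t^{22}\notin\cSym_{<0}^{n\times n}$, then $W_t(x) = +\infty$ for some $x\in\R^n$, whence no solution of \er{eq:DRE}, \er{eq:IC-M} can exist at time $t$ (a finite valued {\ltwo}-gain value function being a finite quadratic that identifies such a solution). If $P_0 + \Lambda_t^{22}$ has a strictly positive eigenvalue, restricting the supremum in \er{eq:Wsup} to the associated eigendirection already yields $+\infty$. The only remaining possibility is that $P_0 + \Lambda_t^{22}\le 0$ is singular with kernel vector $v\ne 0$; restricting the supremum to $y = s\,v$, $s\in\R$, gives $\sup_{s}\, s\, x'\Lambda_t^{12}v$, which is $+\infty$ unless $\Lambda_t^{12}v = 0$. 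This last case is excluded because $\Lambda_t^{12}$ is invertible: the semiconvex-transform computation behind the proof of Theorem \ref{thm:Lambda-and-Q} yields $\Lambda_t^{12} = -Q_t^{12}(Q_t^{22}-M)^{-1}M$, in which $M$ is invertible by Assumption \ref{ass:M}, $Q_t^{12}$ is a fundamental matrix solution of the linear matrix equation \er{eq:Q-12} (hence invertible), and $Q_t^{22}-M\in\cSym_{>0}^{n\times n}$ for $t>0$ by the controllability Assumption \ref{ass:controllable}, cf. \er{eq:Q-22} and Lemma \ref{lem:kernel-G-finite}.

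It then remains to assemble the escape-time claim. The two steps above show that, for each $t\in\R_{>0}$, the solution $P_t$ of \er{eq:DRE}, \er{eq:IC-M} exists if and only if $P_0 + \Lambda_t^{22}\in\cSym_{<0}^{n\times n}$, and is given by \er{eq:P-and-Lambda} in that case. Since the set of $t\in\R_{>0}$ for which $P_t$ exists is the interval $(0,t^*(P_0))$ by Section \ref{sec:symplectic}, taking suprema yields \er{eq:max-plus-escape}; down-closedness of $\{t>0: P_0+\Lambda_t^{22}<0\}$ is then automatic, but may also be seen directly from $\Lambda_t^{22} = \Upsilon(Q_t^{22})$ with $\Upsilon$ Loewner-monotone on $\cSym_{>M}^{n\times n}$ (cf. \er{eq:Upsilon}) and $Q_t^{22}$ Loewner-nondecreasing (cf. \er{eq:Q-22}). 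The main obstacle is the second step, and in particular the exclusion of the degenerate singular case: it is exactly here that controllability (Assumption \ref{ass:controllable}), through invertibility of $\Lambda_t^{12}$ — equivalently finiteness of the kernel $G_t$ via Lemma \ref{lem:kernel-G-finite} — is indispensable. Secondary care is needed in justifying the extended-real-valued identity \er{eq:Wsup} and the precise correspondence between finiteness of the {\ltwo}-gain value function and existence of the DRE solution.
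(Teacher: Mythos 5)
Your proposal is correct in spirit but takes a genuinely different route from the paper. The paper's proof of Theorem~\ref{thm:P-and-Lambda} never argues through the value function directly: it establishes a cycle of algebraic equivalences between the symplectic escape condition \er{eq:symplectic-escape}, the condition $\Upsilon(P_0)+Q_s^{22}\in\cSym_{<0}^{n\times n}$ for all $s\in(0,t]$, its time-$t$ version, and $P_0+\Lambda_t^{22}\in\cSym_{<0}^{n\times n}$, deploying the Woodbury identity twice, the auxiliary DRE solution $\widetilde Q$ initialized at $\mu(P_0)$, the DRE monotonicity Lemma~\ref{lem:monotone}, and only then derives the formula \er{eq:P-and-Lambda} via the max-plus integral. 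You instead begin with the max-plus integral representation \er{eq:Wsup} of $W_t$ (valid in extended reals because $G_t$ is finite and the supremum interchange in Theorem~\ref{thm:op-G} is a Fubini-for-sup argument), then characterize existence of the DRE solution via finiteness of $W_t$. This is conceptually cleaner, avoids the Woodbury manipulations, and makes the role of $\Lambda_t^{12}$ explicit; your derivation of $\Lambda_t^{12} = -Q_t^{12}(Q_t^{22}-M)^{-1}M$ from \er{eq:pre-G-and-Lambda}, and its invertibility via the evolution operator representation $Q_s^{12}=-\op{U}_t(s,0)M$ in Lemma~\ref{lem:controllable-Q22}, is correct.

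The one place that needs to be tightened is the step ``$W_t$ finite valued for every $x$ $\Rightarrow$ $W_t(x)=\demi x'P_tx$ with $P_t$ the DRE solution.'' This identification requires the DRE solution to exist on all of $[0,t]$, i.e.\ finiteness of $W_s$ for all $s\in(0,t]$, not just at $s=t$. You do invoke down-closedness of $\{t>0: P_0+\Lambda_t^{22}<0\}$ (via $\Lambda_t^{22}=\Upsilon(Q_t^{22})$, $\Upsilon$ Loewner-monotone, $Q_t^{22}$ nondecreasing by \er{eq:Q-22} — all correct), but it appears after the identification rather than before it, so the argument as written is circular. Reordering fixes it: first establish down-closedness, then observe that $P_0+\Lambda_t^{22}<0$ forces $W_s$ finite for all $s\in(0,t]$, and only then invoke the standard LQ fact that a value function finite on $[0,t]$ is quadratic and its Hessian solves the DRE. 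Alternatively, one can verify directly that the right-hand side of \er{eq:P-and-Lambda} satisfies \er{eq:DRE} on $(0,T)$ with $T\doteq\sup\{t:P_0+\Lambda_t^{22}<0\}$, which sidesteps the appeal to LQ theory and is closer in spirit to the paper's purely algebraic route. Either way, the essential structural inputs — monotonicity of $Q_t^{22}$, invertibility of $M-Q_t^{22}$ (Lemma~\ref{lem:controllable-Q22}), and finiteness of $G_t$ — are the same ones the paper uses, so the two proofs are mutually confirming.
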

% \vspace{2mm}
\begin{proof}
See Appendix \ref{app:P-and-Lambda}.
\end{proof}

% \begin{remark}
% \label{rem:max-plus-escape}
By inspection of \er{eq:symplectic-P}, \er{eq:symplectic-XY}, and \er{eq:kernel-G-and-Q}, \er{eq:P-and-Lambda}, it is evident that the symplectic and max-plus fundamental solutions both provide a characterization of all particular solutions of the DRE \er{eq:DRE}, \er{eq:IC-M}. Furthermore, both provide characterizations of the corresponding finite escape time $t^*(P_0)\in\R_{>0}^+$, see \er{eq:symplectic-escape} and \er{eq:max-plus-escape}. However, by inspection, a crucial difference between these latter characterizations concerns their ease of evaluation, assuming their respective fundamental solutions are known for all time. In particular, the existence or otherwise of a finite escape at time $t$ due to an initial condition $P_0\in\cSym_{>M}^{n\times n}$ can be verified using the max-plus characterization \er{eq:max-plus-escape} by testing the inequality  $P_0 + \Lambda_t^{22}\in\cSym_{<0}^{n\times n}$ once. However, the same verification using the symplectic characterization \er{eq:symplectic-escape} requires testing invertibility of $\Sigma_s^{11} + \Sigma_s^{12} P_0$ for all $s\in(0,t]$.
% \end{remark}

%%		Fundamental solution semigroups for the DRE.

\newcommand{\ostar}		{{\circledast}}

\section{FUNDAMENTAL SOLUTIONS SEMIGROUPS}
\label{sec:fund-semigroups}
Both the symplectic fundamental solution $\Sigma_t\in\R^{2n\times 2n}$ and the max-plus fundamental solution $\Lambda_t\in\cSym^{2n\times 2n}$, specified respectively by \er{eq:symplectic-XY}, \er{eq:kernel-G-and-Q}, provide a path for establishing existence of a unique particular solution $P_t$ of DRE \er{eq:DRE}, \er{eq:IC-M} on the time interval $[0,t]\in\R_{>0}$, $t\in\R_{>0}$, and computing that solution. As the term {\em fundamental solution} implies, this is possible for any initial data $P_0$ satisfying \er{eq:IC-M}. Indeed, both fundamental solutions can be evolved to longer time horizons {\em independently} of any specific initial data for the DRE, thereby giving rise to a corresponding {\em symplectic} and {\em max-plus fundamental solution semigroups of matrices}. In defining the latter max-plus fundamental solution semigroup, it is useful to define a matrix operation $\ostar$ acting on $\Lambda,\hat\Lambda\in\cSym^{2n\times 2n}$ by 
\begin{equation}
	\begin{aligned}
	\lbrack\Lambda\, \ostar\, \hat\Lambda\rbrack^{11}
	& \doteq \Lambda^{11} - \Lambda^{12} (\hat\Lambda^{11} + \Lambda^{22})^{+} (\Lambda^{12})',
	\\
	[\Lambda\, \ostar\, \hat\Lambda]^{12}
	& \doteq -\Lambda^{12} (\hat\Lambda^{11} + \Lambda^{22})^{+} \hat\Lambda^{12},
	\\
	[\Lambda\, \ostar\, \hat\Lambda]^{21}
	& \doteq ([\Lambda\, \ostar\, \hat\Lambda]^{12})',
	\\
	[\Lambda\, \ostar\, \hat\Lambda]^{22}
	& \doteq \hat\Lambda^{22} - (\hat\Lambda^{12})' (\hat\Lambda^{11} + \Lambda^{22})^{+} \hat\Lambda^{12},
	\end{aligned}
	\label{eq:mult-ostar}
\end{equation}
using the notation of \er{eq:block}, in which $(\cdot)^+$ denotes the Moore-Penrose inverse.

\begin{theorem}
\label{thm:max-plus-semigroup}
Under Assumptions \ref{ass:M} and \ref{ass:controllable}, the families of matrices $\{\Sigma_t\}_{t\in\R_{>0}}$ and $\{\Lambda_t\}_{t\in\R_{>0}}$ defined by \er{eq:symplectic-XY} and \er{eq:kernel-G-and-Q}, and related via the bijection $\Pi^{-1}\circ\Xi$ of \er{eq:Lambda-and-Sigma}, define a pair of one-parameter semigroups of matrices in $\R^{2n\times 2n}$ satisfying%  the semigroup properties
\begin{align}
	& \Sigma_{t+s} 
	= \Sigma_t\, \Sigma_s,
	&&
	\Lambda_{t+s}
	= \Lambda_t\, \ostar\, \Lambda_s,
	\label{eq:semigroup-associations}
\end{align}
for all $t,s\in\R_{>0}$, in which the respective associative binary operations are standard matrix multiplication, and the matrix operation $\ostar$ of \er{eq:mult-ostar}.
\end{theorem}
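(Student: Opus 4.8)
The plan is to obtain both semigroup identities by transporting the already-established semigroup structures through the bijections constructed in the previous sections, rather than verifying \er{eq:semigroup-associations} by direct matrix manipulation. For the symplectic family, the identity $\Sigma_{t+s} = \Sigma_t\,\Sigma_s$ is immediate from the definition $\Sigma_t = \exp(\mathcal{H}t)$ in \er{eq:symplectic-XY}, since the matrix exponential of a fixed generator is a one-parameter semigroup; the only point to note is that $\Sigma_t$ is defined and invertible for every $t\in\R_{>0}$ (indeed for every $t\in\R$), so the semigroup law holds unconditionally on $\R_{>0}$. The substance of the theorem is therefore the claim that the family $\{\Lambda_t\}_{t\in\R_{>0}}$ of \er{eq:kernel-G-and-Q} is closed under the operation $\ostar$ of \er{eq:mult-ostar} and satisfies $\Lambda_{t+s} = \Lambda_t\,\ostar\,\Lambda_s$.

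First I would recall from Section \ref{ssec:DRE-fund} that the operators $\{\op{G}_t^\oplus\}_{t\in\R_{\ge 0}}$ inherit the Lax--Oleinik semigroup property \er{eq:op-G-semigroup}, namely $\op{G}_t^\oplus\,\op{G}_s^\oplus = \op{G}_{t+s}^\oplus$. Translating this composition law into a statement about the kernels $G_t$, using the max-plus integral representation \er{eq:op-G}, gives the max-plus kernel-composition identity
\begin{align}
	G_{t+s}(x,y)
	& = \int_{\R^n}^\oplus G_t(x,\xi)\otimes G_s(\xi,y)\, d\xi
	= \sup_{\xi\in\R^n}\ \bigl[ G_t(x,\xi) + G_s(\xi,y) \bigr]
	\nn
\end{align}
for all $x,y\in\R^n$ and $t,s\in\R_{>0}$. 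By Theorem \ref{thm:Lambda-and-Q} (using Assumptions \ref{ass:M} and \ref{ass:controllable}), each $G_r$ is the finite quadratic form with Hessian $\Lambda_r\in\cSym^{2n\times 2n}$, so the right-hand side is an unconstrained quadratic maximization over $\xi$. The next step is to carry out that maximization: writing $G_t(x,\xi)+G_s(\xi,y)$ as a quadratic in $(x,\xi,y)$, the inner coefficient of $\xi$ is $\Lambda_t^{22}+\Lambda_s^{11}$, and stationarity in $\xi$ (when this block is negative definite, or via the pseudo-inverse in general) and back-substitution yields precisely the block formulas \er{eq:mult-ostar} for $[\Lambda_t\,\ostar\,\Lambda_s]^{ij}$. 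This identifies $\Lambda_{t+s}$ with $\Lambda_t\,\ostar\,\Lambda_s$ and simultaneously shows the result lies in $\cSym^{2n\times 2n}$, i.e.\ the family is $\ostar$-closed. Associativity of $\ostar$ on this family then follows for free from associativity of operator composition, $\op{G}_t^\oplus(\op{G}_s^\oplus\op{G}_r^\oplus) = (\op{G}_t^\oplus\op{G}_s^\oplus)\op{G}_r^\oplus$, without any need to verify associativity of \er{eq:mult-ostar} algebraically. Finally, the fact that the two families are related by the bijection $\Pi^{-1}\circ\Xi$ is exactly \er{eq:Lambda-and-Sigma}, so nothing further is needed there.

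The main obstacle I anticipate is the justification of the quadratic maximization step when the inner block $\Lambda_t^{22}+\Lambda_s^{11}$ is not negative definite, so that the supremum is attained (if at all) only along a subspace and the Moore--Penrose inverse in \er{eq:mult-ostar} is genuinely needed. One must check that the supremum is indeed finite for all $x,y$ — which is where controllability (Assumption \ref{ass:controllable}) and the semiconcavity guaranteed by Assumption \ref{ass:M}, via Theorem \ref{thm:Lambda-and-Q} and Lemma \ref{lem:kernel-G-finite}, are used — and that the resulting completed-square expression is correctly read off block-by-block; this is the one place where the calculation must be done carefully rather than cited. A secondary point is bookkeeping: the operations live on $\R_{>0}$ rather than $\R_{\ge 0}$ because the quadratic form for $G_t$ of Theorem \ref{thm:Lambda-and-Q} is asserted only for $t\in\R_{>0}$ (at $t=0$ the kernel is the max-plus delta $\delta^-$, not a quadratic), so the semigroup statement is correctly phrased over $\R_{>0}$ and no limiting argument at $t\to0^+$ is required.
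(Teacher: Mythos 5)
Your proposal follows essentially the same route as the paper's own proof: the $\Sigma$-semigroup law is read off from the matrix exponential; the $\Lambda$-semigroup law is obtained by passing from the operator identity $\op{G}_t^\oplus\op{G}_s^\oplus = \op{G}_{t+s}^\oplus$ of \er{eq:op-G-semigroup} to a max-plus kernel-composition law, invoking Theorem \ref{thm:Lambda-and-Q} to express each kernel as a finite quadratic form, and then completing the square over the intermediate variable to read off the block formulas of \er{eq:mult-ostar}; and you correctly anticipate (and the paper addresses via a cited lemma) that the Moore--Penrose inverse appears because the inner block $\Lambda_s^{11}+\Lambda_t^{22}$ need only be negative semidefinite, with finiteness of the supremum supplied by Assumptions \ref{ass:M} and \ref{ass:controllable} through Lemma \ref{lem:kernel-G-finite}.

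One step you treat as automatic deserves a word: deducing the kernel identity $G_{t+s}(x,y) = \int^\oplus_{\R^n} G_t(x,\eta)\otimes G_s(\eta,y)\,d\eta$ from the operator identity $\op{G}_t^\oplus\op{G}_s^\oplus\psi = \op{G}_{t+s}^\oplus\psi$ (holding for all admissible $\psi$) requires that a max-plus integral operator determines its kernel uniquely on the relevant class of semiconcave functions. The paper invokes a modification of a cited result from \cite{M:08} for precisely this point; without it, interchanging the max-plus integrals only shows that the two kernels induce the same operator, not that they coincide pointwise. Once that is granted, the rest of your argument matches the paper's.
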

\begin{proof}
Fix $t,s\in\R_{>0}$. The left-hand semigroup property in \er{eq:semigroup-associations} is immediate by definition \er{eq:symplectic-XY} of the symplectic fundamental solution $\Sigma_t$. With Assumptions \ref{ass:M} and \ref{ass:controllable} asserted, Theorem \ref{thm:Lambda-and-Q} implies that $\Lambda_t,\Lambda_s,\Lambda_{t+s}\in\cSym^{2n\times 2n}$ are well-defined by \er{eq:kernel-G-and-Q}, while \er{eq:Lambda-and-Sigma} holds with bijection $\Pi^{-1}\circ\Xi$ by Theorems \ref{thm:symplectic-and-Q} and \ref{thm:Lambda-and-Q}. Furthermore, Theorem \ref{thm:op-G} and \er{eq:op-G-semigroup} imply that for any $\psi\in\dom(\op{G}_{t+s}^\oplus)\subset\dom(\op{G}_s^\oplus)$,
\begin{align}
	& \op{G}_t^\oplus\, \op{G}_s^\oplus \psi
	= \int_{\R^n}^\oplus G_t(\cdot,\eta) \otimes \left[ \int_{\R^n}^\oplus G_s(\eta,y)\otimes \psi(y) \, dy\right] d\eta 
	\nn\\
	& = \int_{\R^n}^\oplus \left[ \int_{\R^n}^\oplus G_t(\cdot,\eta) \otimes G_s(\eta,y) \, d\eta \right] \otimes \psi(y) \, dy
	\nn\\
	& = \op{G}_{t+s}^\oplus\, \psi = \int_{\R^n}^\oplus G_{t+s}(\cdot,y)\otimes \psi(y)\, dy.
	\nn
\end{align}
Applying an appropriate modification of \cite[Lemma 4.5]{M:08} to equate the kernels of the left- and right-hand sides above, Theorem \ref{thm:Lambda-and-Q} implies that
\begin{align}
	& \demi \left[ \ba{c}
		x \\ y
	\ea \right]' \Lambda_{t+s} \left[ \ba{c}
		x \\ y
	\ea \right]
	= G_{t+s}(x,y)
	\nn\\
	&
	 = \int_{\R^n}^\oplus G_t(x,\eta) \otimes G_s(\eta,y) \, d\eta
	\nn\\
	& = \int_{\R^n}^\oplus \demi \left[ \ba{c}
		x \\ \eta
	\ea \right]' \Lambda_t \left[ \ba{c}
		x \\ \eta
	\ea \right]  \otimes \demi \left[ \ba{c}
		\eta \\ y
	\ea \right]' \Lambda_s \left[ \ba{c}
		\eta \\ y
	\ea \right]
	d\eta
	\nn\\
	& = \int_{\R^n}^\oplus \lambda_{x,y}(\eta) \, d\eta
	\label{eq:Lambda-and-lambda}
\end{align}
where $\lambda_{x,y}:\R^n\rightarrow\R$ is defined for each $x,y\in\R^n$ by
\begin{align}
	& \lambda_{x,y}(\eta)
	\doteq \demi \left[ \ba{c}
			x \\ y \\\hline \eta
		\ea \right]' \! \left[ \ba{cc|c} 
			\Lambda_t^{11} & 0 & \Lambda_t^{12}
			\\
			&& \\[-3mm]
			0 & \Lambda_s^{22} & (\Lambda_s^{12})'
			\\[-3.5mm]
			&&
			\\\hline
			&& \\[-3.5mm]
			(\Lambda_t^{12})' & \Lambda_s^{12} & \Lambda_s^{11} + \Lambda_t^{22}
		\ea \right] \! \left[ \ba{c}
			x \\ y \\\hline \eta
		\ea \right]
	\nn\\
	& = \demi\, \eta' (\Lambda_s^{11} + \Lambda_t^{22}) \, \eta
		+ \eta' \left[ \ba{c} 
			\Lambda_t^{12} \\ (\Lambda_s^{12})'
	\ea \right]' \left[ \ba{c}
			x \\ y
	\ea \right]
	\nn\\
	& \qquad + 		
	 \demi \left[ \ba{c}
			x \\ y 
		\ea \right]' \left[ \ba{cc}
			\Lambda_t^{11} & 0 
			\\
			0 & \Lambda_s^{22}
		\ea \right] \left[ \ba{c}
			x \\ y 
		\ea \right]
	\nn
\end{align}
for all $\eta\in\R^n$. As $\Lambda_{t+s}\in\R^{2n\times 2n}$ is well-defined by \er{eq:kernel-G-and-Q}, note that $G_{t+s}(x,y)\in\R$ for any $x,y\in\R^n$ fixed, see also Lemma \ref{lem:kernel-G-finite}. That is, $\sup_{\eta\in\R^n} \lambda_{x,y}(\eta)\in\R$. Consequently, applying \cite[Lemma E.2]{DM1:15}, the following properties hold:
\begin{enumerate}[1)]
\item $\Lambda_s^{11} + \Lambda_t^{22}\in\cSym_{\le 0}^{n\times n}$;
\item the Moore-Penrose pseudo-inverse $(\Lambda_s^{11} + \Lambda_t^{22})^+\in\cSym_{\le 0}^{n\times n}$ exists; and
\item there exists a $\eta^*\in\R^n$ given by
\begin{align}
	& \eta^*\doteq - (\Lambda_t^{22} + \Lambda_s^{11})^+ \left[ \ba{c} 
			\Lambda_t^{12} \\ (\Lambda_s^{12})'
	\ea \right]' \left[ \ba{c}
			x \\ y
	\ea \right]
	\nn
\end{align}
such that
\end{enumerate}
\begin{align}
	\int_{\R^n}^\oplus \lambda_{x,y}(\eta) \, d\eta
	= \lambda_{x,y}(\eta^*)
	% \nn\\
	% & 
	=  \demi \left[ \ba{c}
			x \\ y 
		\ea \right]' \! \left[ \ba{cc}
			\Lambda_t^{11} & 0 
			\\
			0 & \Lambda_s^{22}
		\ea \right] \! \left[ \ba{c}
			x \\ y 
		\ea \right] &
	% \label{eq:lambda-xy-max}
	\nn\\
	- \demi\, 
		\left[ \ba{c}
			x \\ y 
		\ea \right]' \left[ \ba{c}
			\Lambda_t^{12} \\ (\Lambda_s^{12})'
		\ea \right]
		(\Lambda_s^{11} + \Lambda_t^{22})^+
		\left[ \ba{c}
			\Lambda_t^{12} \\ (\Lambda_s^{12})'
		\ea \right]' \left[ \ba{c}
			x \\ y
		\ea \right] & .
	\nn
\end{align}
Applying this last property in \er{eq:Lambda-and-lambda} and recalling that $x,y\in\R^n$ are arbitrary yields \er{eq:mult-ostar} via
\begin{align}
	& \Lambda_{t+s} = \left[ \! \ba{cc}
			\Lambda_t^{11} & 0 
			\\
			0 & \Lambda_s^{22}
		\ea \! \right] -
	\left[ \! \ba{c}
			\Lambda_t^{12} \\ (\Lambda_s^{12})'
		\ea \! \right] \! 
		(\Lambda_s^{11} + \Lambda_t^{22})^+ \!
		\left[ \! \ba{c}
			\Lambda_t^{12} \\ (\Lambda_s^{12})'
		\ea \! \right]' \!\!\!.
	\nn
\end{align}
\end{proof}

The semigroups properties \er{eq:semigroup-associations} also naturally define respective notions of exponentiation. In particular,
\begin{equation}
	\begin{aligned}
	& \Sigma_t = (\Sigma_1)^{t},
	&& \Sigma_1 = \exp(\op{H}),
	\\
	& \Lambda_t = (\Lambda_1)^{\ostar t},
	&& \Lambda_1 = \Pi^{-1}\circ\Xi\circ\exp(\op{H}), %(\Sigma_1)\,,
	\end{aligned}
	\label{eq:exponentiation}
\end{equation}
in which $\op{H}$, $\Xi$, $\Pi^{-1}$ are as per \er{eq:symplectic-XY}, \er{eq:Xi}, \er{eq:inv-Pi}, and the exponentiations $(\cdot)^t$, $(\cdot)^{\ostar t}$ denote (respectively) the standard matrix exponentiation, and an exponentiation defined with respect to the $\ostar$ operation of \er{eq:mult-ostar}, see \cite[Section 5]{M:08} and Remark \ref{rem:exp} below. As is the case with standard matrix exponentiation, note that \er{eq:semigroup-associations}, \er{eq:exponentiation} imply that
\begin{align}
	& (\Lambda_1)^{\ostar (t+s)}
	= \Lambda_{t+s} = \Lambda_t\, \ostar\, \Lambda_s
	= (\Lambda_1)^{\ostar t} \, \ostar \, (\Lambda_1)^{\ostar s}\,.
	\nn
\end{align}
for all $t,s\in\R_{>0}$.

\begin{remark}\cite[Section 5]{M:08} 
\label{rem:exp}
The semigroup property \er{eq:semigroup-associations} immediately facilitates the definition of $\ostar$-exponentiation for any positive integer $n\in\N$ by
\begin{align}
	& (\Lambda_\tau)^{\ostar n}
	\doteq \underbrace{\Lambda_\tau\, \ostar\, \Lambda_\tau \, \ostar\, \cdots\, \ostar\, \Lambda_\tau}_{\text{$n$ times}} = \Lambda_{n \tau}
	\label{eq:ostar-integer}
\end{align}
where $\tau\in\R_{>0}$. By inspection, $((\Lambda_\tau)^{\ostar m})^{\ostar n} = (\Lambda_\tau)^{\ostar m n}$ for all $m,n\in\N$. Using this observation, \er{eq:ostar-integer} can be extended to positive rational and subsequently positive real exponents. In particular, given $p\in\cQ_{>0}$ and coprime $m,n\in\N$ such that $n\, p = m$, \er{eq:ostar-integer} implies that
$
	(\Lambda_\tau)^{\ostar m}
	= \Lambda_{m n (\tau/n)}
	= ((\Lambda_{\tau/n})^{\ostar m})^{\ostar n}
	\doteq ((\Lambda_\tau)^{\ostar p})^{\ostar n}
$,
That is, the positive rational $\ostar$-exponent $(\Lambda_\tau)^{\ostar p}$ is uniquely defined by 
\begin{align}
	& (\Lambda_\tau)^{\ostar p}\doteq (\Lambda_{\tau/n})^{\ostar m}
	\label{eq:ostar-rational}
\end{align}
for all $p = m/n\in\cQ_{>0}$, $m,n\in\N$ coprime. As $\cQ$ is dense in $\R$, and the map $\tau\mapsto\Lambda_\tau$, $\tau\in\R_{>0}$, is continuous by \er{eq:kernel-G-and-Q}, it immediately follows that $\Lambda_t = \lim_{p\in\cQ_{>0}, \, p\rightarrow  t} \Lambda_{p}$. As $\Lambda_p$ can be replaced with the $\ostar$-expononent $(\Lambda_1)^{\ostar p}$ of \er{eq:ostar-rational}, the $\ostar$-exponent $(\Lambda_1)^{\ostar t}$ of \er{eq:exponentiation} is uniquely defined by
\begin{align}
	& (\Lambda_1)^{\ostar t}
	\doteq \lim_{p\in\cQ_{>0}, \, p\rightarrow  t} (\Lambda_1)^{\ostar p}
	= \lim_{p\in\cQ_{>0}, \, p\rightarrow  t} (\Lambda_{1/n})^{\ostar m}
	\label{eq:ostar-real}
\end{align}
for all $t\in\R_{>0}$, identically to \cite{M:08}. Note that in the right-hand equality of \er{eq:ostar-real}, coprime $m,n\in\N$ are uniquely defined for each $p\in\cQ_{>0}$ in the limiting sequence. Where $t\in\R_{>0}$ is irrational, it follows immediately that $m,n\rightarrow\infty$.% in this limit.
\end{remark}

%%%%%%%%%%%%%%%%%%%%%%%%%%%%%%%%%%%%%%%%%%%%%%%%%%%%%%%%%%%%%%%%%%%%%%%
%%
%%		Example.
%%

\newcommand{\oer}[1]			{\normalfont\eqno{\er{#1}}}
\newcommand{\ooer}[1]			{\normalfont\tag{\ref{#1}}}

\section{SOLVING THE DRE \er{eq:DRE}, \er{eq:IC-M}} % VIA THE NEW SEMIGROUP}
\label{sec:example}

Theorems \ref{thm:P-and-Lambda} and \ref{thm:max-plus-semigroup} together describe a new max-plus primal space fundamental solution semigroup of matrices $\{\Lambda_t\}_{t\in\R_{>0}}$ for propagating solutions $P_t\in\cSym^{n\times n}$ of DRE \er{eq:DRE} forward in time $t\in\R_{>0}$ from initializations $P_0\in\cSym_{>M}^{n\times n}$ as per \er{eq:IC-M}. In particular, \er{eq:mult-ostar}, \er{eq:semigroup-associations}  describe propagation of the new fundamental solution for this DRE, while \er{eq:P-and-Lambda} specifies how this fundamental solution may be used to evaluate a particular solution at any time $t\in\R_{>0}$. In addition, \er{eq:max-plus-escape} provides a general characterization of the corresponding maximal horizon of existence $t^*(P_0)\in\R_{>0}^+$. By inspection, this characterization allows easy verification of whether a specific time $t$ falls before a finite escape $t^*(P_0)\in\R_{>0}$ (if it exists), by testing if $P_0 + \Lambda_t^{22}\in\cSym_{<0}^{n\times n}$ at that time. This is simpler than the corresponding verification using the characterization \er{eq:symplectic-escape} provided by the symplectic fundamental solution \er{eq:symplectic-XY}, where invertibility of a matrix over a range of times must be tested. 

\subsection{Recipe}
\label{ssec:recipe}
A recipe that uses the one-parameter max-plus primal space fundamental semigroup $\{\Lambda_t\}_{t\in\R_{>0}}$ to compute the solution $P_{t}$ of DRE \er{eq:DRE} for any initialization 
$$
	P_0\in\cSym_{>M}^{n\times n}
	\oer{eq:IC-M}
$$
of the form \er{eq:IC-M}, using a fixed time step $\delta\in\R_{>0}$, is as follows:

% \begin{center}
% ${ }^{ }$\\[-8mm]
\noindent\rule{\columnwidth}{0.5pt}
\parbox[t]{0.05\columnwidth}{{\sf I.}\hfill}%
\parbox[t]{0.9\columnwidth}{\hfil{\centering\sf Initialize and propagate the semigroup \er{eq:mult-ostar}, \er{eq:semigroup-associations}}\hfil}
% \parbox{0.05\columnwidth}{ }
\\[-2mm]
\rule{\columnwidth}{0.5pt}
% \end{center}
\begin{itemize}
\item[\done] {\em (Initialize basis)} Select $M\in\cSym^{n\times n}$ of \er{eq:basis} using Lemma \ref{lem:ass-M-holds}. Check that Assumptions \ref{ass:M} and \ref{ass:controllable} hold.
\item[\dtwo] {\em (Initialize semigroup)} Fix time step $\delta\in\R_{>0}$ and maximal time horizon $\bar t\doteq K\delta\in\R_{>0}$ for some fixed $K\in\N$. Using the matrix operators $\Xi$, $\Pi^{-1}$ of \er{eq:Xi}, \er{eq:inv-Pi}, initialize an element of the semigroup by
\begin{align}
	& \Lambda_\delta = \Pi^{-1}\circ\Xi\circ\exp(\op{H}\delta)\in\cSym^{2n\times 2n},
	\label{eq:Lambda-delta}
\end{align}
where $\op{H}\in\R^{2n\times 2n}$ is the Hamiltonian matrix \er{eq:symplectic-XY}.
% (See Theorems \ref{thm:symplectic-and-Q} and \ref{thm:Lambda-and-Q}.)
\item[\dthree] {\em (Propagate semigroup)} Compute a subset $\{\Lambda_{k\delta}\}_{k\in\N_{\le K}}$ of the semigroup, corresponding to a temporal grid defined by $\delta\in\R_{>0}$, via the evolution 
\begin{align}
	& \Lambda_{(k+1)\delta}
	= \Lambda_\delta\, \ostar \, \Lambda_{k\delta},
	\qquad k\in\N_{< K}, %  \ \Lambda_1 \text{ as per \dtwo},
	\label{eq:Lambda-iteration}
\end{align}
as per \er{eq:semigroup-associations}. (See also Remark \ref{rem:doubling} below.)
\end{itemize}
% ${ }^{ }$\\[-6mm]
\rule{\columnwidth}{0.5pt}
\parbox[t]{0.05\columnwidth}{{\sf II.}\hfill}%
\parbox[t]{0.9\columnwidth}{\hfil{\centering\sf Solve the DRE \er{eq:DRE}, \er{eq:IC-M}}\hfil}
\\[-2mm]
\rule{\columnwidth}{0.5pt}
% \end{center}
\begin{itemize}
\item[\dfour] {\em (Initialize a solution)} Select $P_0\in\cSym_{>M}^{n\times n}$. Set $k=1$.
\item[\dfive] {\em (Test for finite escape)} If $P_0 + \Lambda_{k\delta}^{22}\in\cSym_{<0}^{n\times n}$ as per \er{eq:max-plus-escape} then  
evaluate the solution $P_{k\delta}$ at time step $k$ as
\begin{align}
	& P_{k\delta} 
	= \Lambda_{k\delta}^{11} - \Lambda_{k\delta}^{12} \, (P_0 + \Lambda_{k\delta}^{22})^{-1} (\Lambda_{k\delta}^{12})'
	\label{eq:P-delta}
\end{align}
as per \er{eq:P-and-Lambda}. Otherwise, record a finite escape time as occurring in the interval $((k-1)\delta,k\delta]$ and exit.
\item[\dsix] {\em (Iterate)} Increment $k$. If $k\in\N_{\le K}$ then go to step {\dfive}. Otherwise, exit.
\end{itemize}
${ }^{ }$\\[-6mm]
\rule{\columnwidth}{0.5pt}

As indicated, the recipe consists of a total of 6 steps, divided into two parts. Part {\sf I} concerns the initialization and propagation of a subset of the one parameter semigroup of matrices $\{\Lambda_t\}_{t\in\R_{>0}}$ required for computing {\em any} particular solution of DRE \er{eq:DRE}, \er{eq:IC-M} up to a pre-specified time horizon $\bar t\in\R_{>0}$. Part {\sf II} concerns the subsequent evaluation of such a particular solution (and corresponding finite escape, if it exists). Crucially, part {\sf I} need only be completed once, with the elements of the semigroup computed there used repeatedly in part {\sf II} in evaluating any particular solutions of interest, without modification. This is demonstrated by example.

\begin{remark}
\label{rem:doubling}
For fast propagation of $\Lambda_{k\delta}$ to large $k\in\N_{\le K}$, where $K\doteq2^N$, $N\in\N$, the {\em linear} time-index accumulation in \er{eq:Lambda-iteration} can be replaced with time-index {\em doubling} \cite{ZD2:15}, ie.
$$
	\Lambda_{(2^{\kappa+1})\delta} = \Lambda_{(2^\kappa)\delta} \otimes \Lambda_{(2^\kappa)\delta}.
	\quad \kappa\in\{0\}\cup\N_{<N}. % \cup\{0\}.
$$
% Further details of time index doubling can be found in \cite{ZD1:15}.
\end{remark}

\subsection{Example -- no finite escape}
In demonstrating the recipe described above, an example from \cite{M:08} is considered. In particular, define $A\in\R^{n\times n}$, $B\in\R^{n\times m}$, $C\in\R^{p\times n}$, and $n=m=p=2$, by
\begin{align}
	& 
	\begin{gathered}
	A \doteq \left[ \ba{cc}
			-2.000 & +1.600
			\\
			-1.600 & -0.400 
		\ea \right],\ 
	B \doteq \left[ \ba{cc}
			+0.216 & -0.008
			\\
			-0.008 & +0.216
		\ea \right]^\frac{1}{2}\!\!\!,
	\\
	C \doteq \left[ \ba{cc}
			+1.500 & +0.200
			\\
			+0.200 & +1.600
		\ea \right]^{\frac{1}{2}}\!\!\!,
	\ 
	M_0 \doteq \left[ \ba{cc}
		+0.651 & -0.310
		\\
		-0.310 & +1.160
		\ea \right],
	\end{gathered}
	\nn
\end{align}
where $M_0\in\cSym_{>0}^{n\times n}$ is the corresponding stabilizing solution of ARE \er{eq:ARE} as per Lemma \ref{lem:ass-M-holds}. In view of \er{eq:ass-M-holds}, select
\begin{align}
	& M \doteq \left[ \ba{cc}
		-1.000 & -0.200
		\\
		-0.200 & -1.000
		\ea \right] \in\cSym_{<M_0}^{n\times n}.
	\nn
\end{align}
Consequently, Assumption \ref{ass:M} holds. Step {\done} is completed via a standard rank calculation to verify that Assumption \ref{ass:controllable} holds. Theorems \ref{thm:symplectic-and-Q}, \ref{thm:Lambda-and-Q}, and \ref{thm:max-plus-semigroup} subsequently imply that the one parameter semigroup of matrices $\{\Lambda_t\}_{t\in\R_{>0}}$ propagated by \er{eq:mult-ostar}, \er{eq:semigroup-associations} is well-defined and may be computed as indicated in steps {\dtwo}, {\dthree}. With $\delta \doteq 0.05$, $K\doteq 80$, $\bar t \doteq 4$, \er{eq:Lambda-delta} yields
\begin{align}
	\Lambda_\delta
	& = 
	\left[ \ba{cc|cc}
		-83.48 & -3.021 & +92.26 & -4.011
		\\
		-3.021 & -91.11 & +11.07 & +92.42
		\\\hline
		+92.26 & +11.07 & -102.6 & -3.420
		\\
		-4.011 & +92.42 & -3.420 & -94.28
	\ea \right].
	\nn
	%
	% \left[\ba{cc|cc}
	%	-37.44 & -1.113 & +45.46 & -5.592
	%	\\
	%	-1.113 & -44.70 & +9.405 & +45.77
	%	\\\hline
	%	+45.45  & +9.405 & -56.46 & -1.473
	%	\\
	%	-5.592 & +45.77 & -1.473 & -47.92
	% \ea \right].
	% \nn
\end{align}
Subsequently iterating via \er{eq:Lambda-iteration} as per step {\dthree} yields the required set of matrices $\{\Lambda_{k\delta}\}_{k\in\N_{\le K}}$. 
\if{false}

These matrices are partially illustrated in Figure \ref{fig:L22} via the corresponding quadratic functions $x\mapsto \demi x' \Lambda_{k\delta}^{22} x$, $x\in\R^2$, for selected $k\in\N_{\le K}$. Observe in particular that $\Lambda_{k\delta}^{22}$ is non-decreasing in $k$.

\begin{figure}[h]
\begin{center}
\vspace{-4mm}
\epsfig{file=DRE-Lambda.eps,width=85mm}
\vspace{-3mm}
\caption{Quadratic functions $x\mapsto \demi x' \Lambda_{k\delta}^{22} x$, $x\in\R^2$, $k\in\N_{\le K}$.}
\label{fig:L22}
\end{center}
\vspace{-5mm}
\end{figure}

\fi

In order to demonstrate the computation of particular solutions of DRE \er{eq:DRE} in steps {\dfour} -- {\dsix}, select an initialization
\begin{align}
	P_0
	& \doteq -0.1 \, I
	\in\cSym_{>M}^{n\times n}
	\label{eq:P0-no-escape}
\end{align}
as per step {\dfour} and \er{eq:IC-M}. Iterating through $k\in\N_{\le K}$ as per steps {\dfive} and {\dsix}, testing for finite escape and applying \er{eq:P-delta}, yields the computed solution $P_{k\delta}$, $k\in\N_{\le K}$ of DRE \er{eq:DRE}, \er{eq:P0-no-escape}. This solution, along with corresponding symplectic and {\sf RK45} solutions, is illustrated in Figure \ref{fig:no-escape}. (Here, the {\sf MATLAB${}^\text{\tiny TM}$ RK45} solver is used, with absolute and relative tolerances set to $10^{-12}$.) All three solutions are in reasonable agreement. No finite escape is observed.

%%		Finite escape.

\subsection{Example -- finite escape}
In order to illustrate finite escape phenomenon, the set of matrices $\{\Lambda_{k\delta}\}_{k\in\N_{\le K}}$ computed above is reused to evaluate the particular solution of DRE \er{eq:DRE}, \er{eq:IC-M} for the initial condition
\begin{align}
	P_0
	& \doteq \left[ \ba{cc}
		2.000 & 0.000
		\\
		0.000 & 6.500
	\ea \right]
	\in\cSym_{>M}^{n\times n}.
	\label{eq:P0-escape}
\end{align}
The problem data is otherwise unchanged. Using the initialization \er{eq:P0-escape} in step {\dfour} and iterating steps {\dfive} and {\dsix} yields the corresponding DRE solution. A finite escape is demonstrated to occur within the horizon $\bar t = 4$ of computation, with $t^*(P_0) \in (2.8,2.9]$ established using \er{eq:max-plus-escape}. Figure \ref{fig:max-eig-escape} illustrates $\sigma_{\max} (P_0 + \Lambda_t^{22})$, $t=k\delta$, $k\in\N_{\le K}$, where $\sigma_{\max}:\cSym^{n\times n}\rightarrow\R$ denotes the maximum eigenvalue map. Note specifically that zero crossing occurs at the finite escape time, as per \er{eq:max-plus-escape}. Note further that $t\mapsto\sigma_{\max}(P_0 + \Lambda_t^{22})$ defines a monotone non-decreasing function. This monotonicity follows from that used to establish the representation \er{eq:max-plus-escape} of the finite escape time $t^*(P_0)$, see the proof of Theorem \ref{thm:P-and-Lambda}. It guarantees that no finite escape occurs prior to this zero crossing.

The computed solution $P_{t}$ of DRE \er{eq:DRE}, \er{eq:P0-escape} for $t=k\delta$, $k\in\N_{\le K}$, is illustrated in Figure \ref{fig:escape}, along with the corresponding symplectic and {\sf RK45} solutions. These solutions are in good agreement, as measured by the absolute errors illustrated in Figure \ref{fig:errors}. As may be observed in the latter figure, these errors increase immediately prior to the finite escape time as the entries of $P_t\in\cSym^{n\times n}$ diverge to $\pm\infty$. For brevity, an error analysis is not included.

\begin{figure}[ht!]
\begin{center}
% \vspace{-2mm}
\psfrag{PP11}{$P_t^{11}$}
\psfrag{PP12}{$P_t^{12}$}
\psfrag{PP22}{$P_t^{22}$}
\epsfig{file=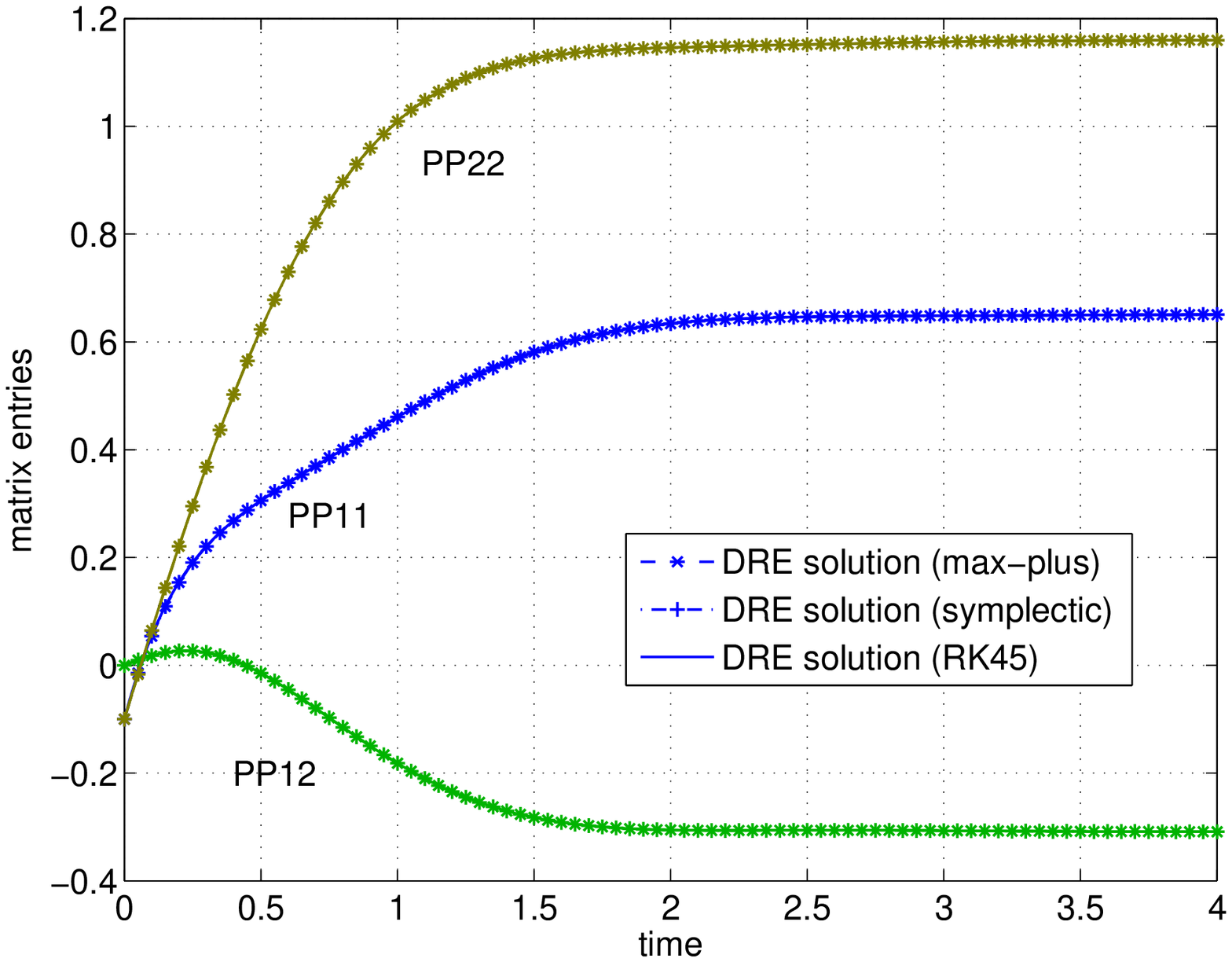,width=88mm}
\vspace{-7mm}
\caption{Max-plus and {\sf RK45} solutions of DRE \er{eq:DRE}, \er{eq:P0-no-escape}.}
\label{fig:no-escape}
\end{center}
\vspace{-5mm}
\end{figure}

\begin{figure}[h]
\begin{center}
% \vspace{-2mm}
\psfrag{SSSSSSSSSSSSSSSSSSSS}{\small $t\mapsto\sigma_{\max}(P_0 + \Lambda_t^{22})$}
\epsfig{file=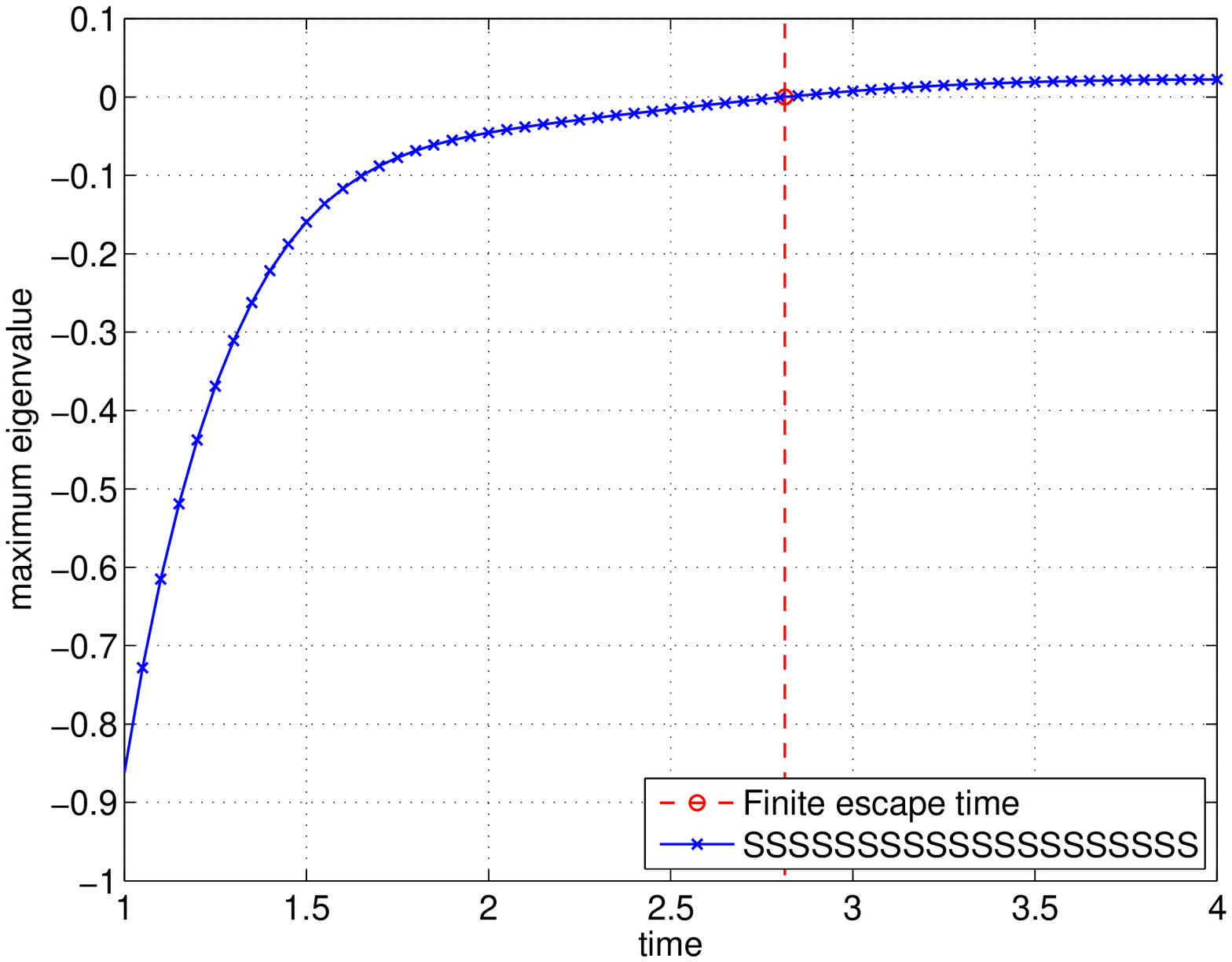,width=88mm}
\vspace{-7mm}
\caption{Maximum eigenvalue map $t\mapsto\sigma_{\max}(P_0 + \Lambda_t^{22})$, $t=k\delta$.}
\label{fig:max-eig-escape}
\end{center}
\vspace{-5mm}
\end{figure}

\begin{figure}[h]
\begin{center}
% \vspace{-2mm}
\psfrag{PP11}{$P_t^{11}$}
\psfrag{PP12}{$P_t^{12}$}
\psfrag{PP22}{$P_t^{22}$}
\epsfig{file=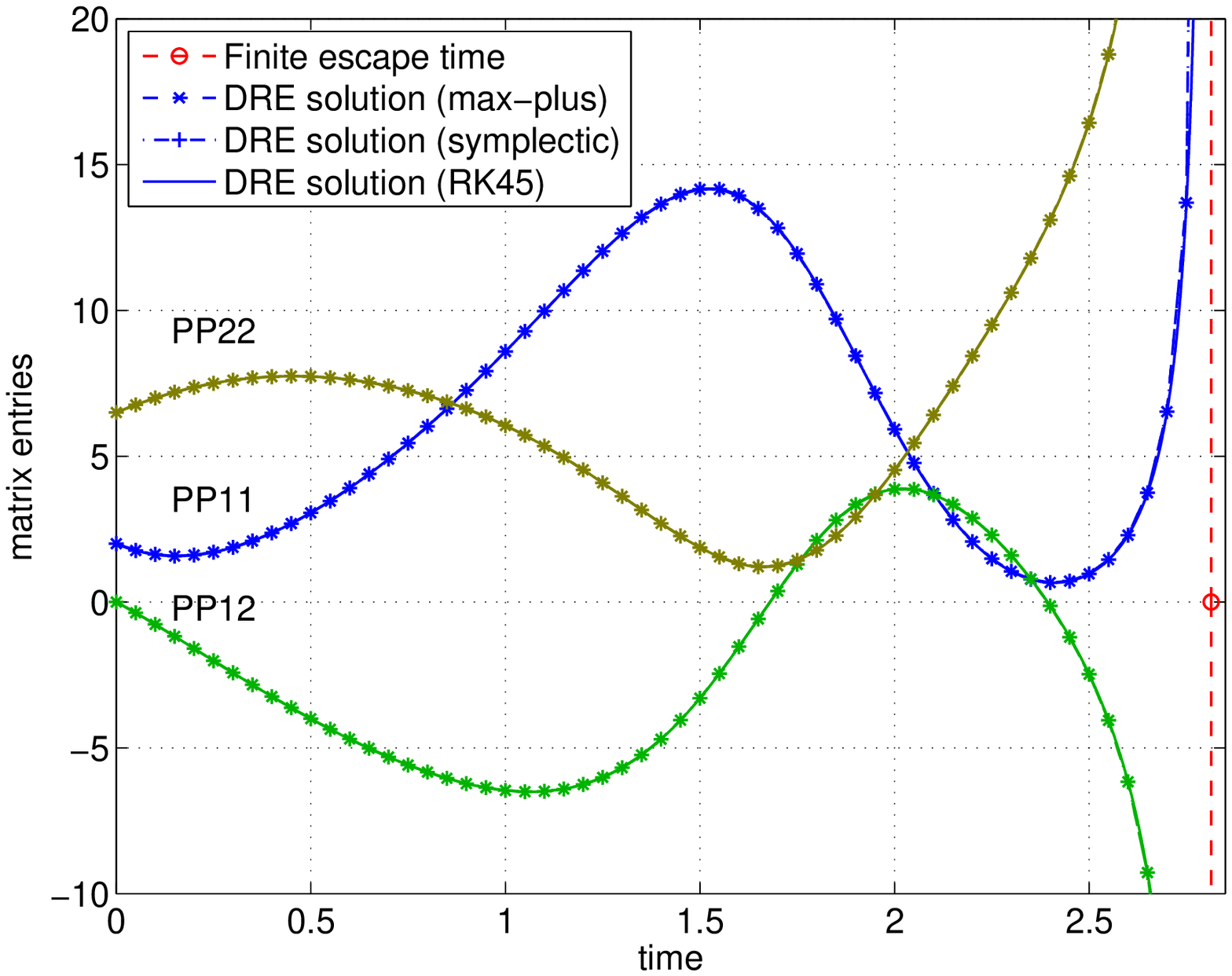,width=88mm}
\vspace{-8mm}
\caption{Max-plus and {\sf RK45} solutions of DRE \er{eq:DRE}, \er{eq:P0-escape}.}
\label{fig:escape}
\end{center}
\vspace{-5.5mm}
\end{figure}

\begin{figure}[h]
\begin{center}
% \vspace{-2mm}
\epsfig{file=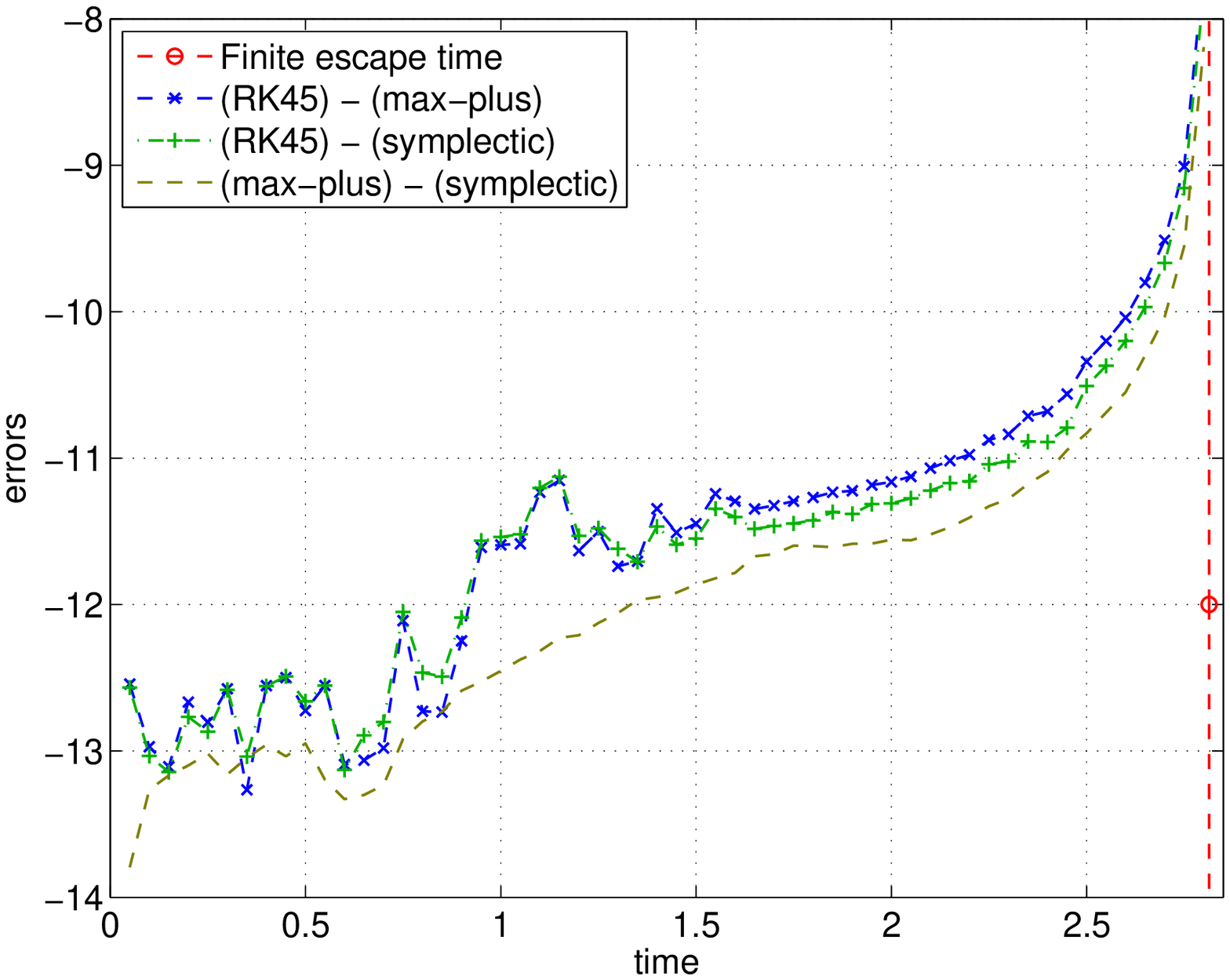,width=88mm}
\vspace{-7mm}
\caption{$\log_{10}(\|\cdot\|_2)$ of errors between solutions of DRE \er{eq:DRE}, \er{eq:P0-escape}.}
\label{fig:errors}
\end{center}
\vspace{-5mm}
\end{figure}

%%%%%%%%%%%%%%%%%%%%%%%%%%%%%%%%%%%%%%%%%%%%%%%%%%%%%%%%%%%%%%%%%%%%%%%
%%
%%		Conclusions.
%%

\section{CONCLUSIONS}
\label{sec:conc}

A new fundamental solution for a class of differential Riccati equations (DREs) is developed using tools from max-plus and semiconvex analysis. It is shown that this fundamental solution is defined by a corresponding fundamental solution semigroup, which describes the evolution of all particular solutions of the DRE, on all time horizons. A new characterization of finite escape time is also provided, enabling a simpler test for existence of particular solutions in comparison with the standard symplectic fundamental solution.

% \addtolength{\textheight}{-12cm}   % This command serves to balance the column lengths
                                  % on the last page of the document manually. It shortens
                                  % the textheight of the last page by a suitable amount.
                                  % This command does not take effect until the next page
                                  % so it should come on the page before the last. Make
                                  % sure that you do not shorten the textheight too much.

%%%%%%%%%%%%%%%%%%%%%%%%%%%%%%%%%%%%%%%%%%%%%%%%%%%%%%%%%%%%%%%%%%%%%%%
%%
%%		Appendix.
%%

\appendix

%%		Proof of the equivalence between the symplectic fundamental solution and $Q_t$.

\subsection{Proof of Theorem \ref{thm:symplectic-and-Q}}
\label{app:symplectic-and-Q}

Since $Q_t\in\R^{2n\times 2n}$ satisfies DRE \er{eq:Q-DRE}, \er{eq:Q-IC} for all $t\in\R_{\ge 0}$, see \er{eq:equal-escape}, it may be represented by a corresponding symplectic fundamental solution of the form \er{eq:symplectic-XY}, denoted here by $\widehat\Sigma_t\in\R^{4n\times 4n}$. In order to apply \er{eq:symplectic-XY}, define $\ophat{H},\Delta\in\R^{4n\times 4n}$ by
\begin{align}
	\ophat{H}
	& \doteq \left[ \ba{c|c}
		 -\hat A  &	-\hat B \hat B' 
		 \\[-4mm]
		 &
		 \\\hline
		 & \\[-3mm]
		 \ \hat C' \hat C	&	\hat A'
	\ea \right],
	\quad
	% \nn\\
	\Delta
	% & 
	\doteq \left[ \ba{cc|cc}
		I & 0 & 0 & 0 \\
		0 & 0 & I & 0 \\\hline
		0 & I & 0 & 0 \\
		0 & 0 & 0 & I
	\ea \right],
	\nn
\end{align}
where $\hat A\in\R^{2n\times 2n}$, $\hat B\in\R^{2n\times m}$, $\hat C\in\R^{p\times 2n}$ are as per \er{eq:ABC-hat}.
Note by inspection that $\Delta = \Delta' = \Delta^{-1}$. By substitution, a straightforward calculation yields that
\begin{align}
	\ophat{H}
	& = \Delta \left[ \ba{c|c}
			\op{H} & 0 \\\hline
			0 & 0 \ea \right] \Delta\,,
	\nn
\end{align}
where $\op{H}\in\R^{2n\times 2n}$ is as per \er{eq:symplectic-XY}. Hence, the symplectic fundamental solution $\widehat\Sigma_t$ for DRE \er{eq:Q-DRE} is, again by \er{eq:symplectic-XY},
\begin{align}
	& \widehat\Sigma_t
	= \exp(\ophat{H} t)
	% = \Delta \exp\left( \left[ \ba{c|c}
	%		\op{H} t & 0 \\\hline
	%		0 & 0 \ea \right]
	% \right)\Delta
	% \nn\\
	= \Delta \left[ \ba{c|c}
			\exp(\op{H} t) & 0 \\\hline
			0 & I
	\ea \right] \Delta
	\nn\\
	& 
	= \Delta \left[ \ba{c|c}
			\Sigma_t & 0 \\\hline
			0 & I
	\ea \right] \Delta
	= \left[ \ba{cc|cc}
		\Sigma_t^{11} & 0 & \Sigma_t^{12} & 0
		\\
		0 & I & 0 & 0 
		\\\hline
		\Sigma_t^{21} & 0 & \Sigma_t^{22} & 0 
		\\
		0 & 0 & 0 & I
	\ea \right],
	\label{eq:symplectic-hat}
\end{align}
for all $t\in\R_{\ge 0}$, where the notation of \er{eq:block} has been applied. Hence, the particular solution $Q_t$ of DRE \er{eq:Q-DRE}, \er{eq:Q-IC} is given in terms of the symplectic fundamental solution \er{eq:symplectic-XY}, with respect to $\widehat\Sigma_t$, by
\begin{align}
	& Q_t
	= \widehat Y_t \widehat X_t^{-1}
	\label{eq:pre-Q-t-1}
\end{align}
for all $t\in[0,t^*(Q_0))\equiv\R_{\ge 0}$, see \er{eq:equal-escape}, in which 
\begin{align}
	& \left[ \ba{c}
		\widehat X_t
		\\[-4mm]
		\\\hline 
		\\[-3mm]
		\widehat Y_t
	\ea \right] 
	\doteq \widehat\Sigma_t \left[ \ba{c}
					I \\\hline \\[-3mm] \mu(M)
				\ea \right]
	\nn\\
	& = \left[ \ba{cc|cc}
		\Sigma_t^{11} & 0 & \Sigma_t^{12} & 0
		\\
		0 & I & 0 & 0 
		\\\hline
		\Sigma_t^{21} & 0 & \Sigma_t^{22} & 0 
		\\
		0 & 0 & 0 & I
	\ea \right] \left[ \ba{cc}
		I & 0 \\
		0 & I \\\hline
		+M & -M \\
		-M & +M
	\ea \right]
	\nn\\
	& = \left[ \ba{cc}
			\Sigma_t^{11} + \Sigma_t^{12} M & -\Sigma_t^{12} M
			\\
			0 & I
			\\\hline
			\Sigma_t^{21} + \Sigma_t^{22} M & -\Sigma_t^{22} M
			\\
			-M & +M
	\ea \right] \in \R^{4n \times 2n},
	\label{eq:pre-Q-t-2}
\end{align}
and $\mu(M)$ is defined by \er{eq:mu}. For any fixed $t\in\R_{\ge 0}$, note in particular that
\begin{align}
	\widehat X_t^{-1}
	& = \left[ \ba{cc}
			(\Sigma_t^{11} + \Sigma_t^{12} M)^{-1} & (\Sigma_t^{11} + \Sigma_t^{12} M)^{-1} \Sigma_t^{12} M
			\\
			0 & I
	\ea \right],
	% \label{eq:pre-Q-t-3}
	\nn
\end{align}
in which $(\Sigma_t^{11} + \Sigma_t^{12} M)^{-1}$ is well-defined as $t^*(M)=+\infty$ by Assumption \ref{ass:M} and \er{eq:equal-escape}. That is, $\widehat X_t^{-1}$ is well-defined. 
Its substitution in \er{eq:pre-Q-t-1}, along with $\widehat Y_t$ from \er{eq:pre-Q-t-2}, yields $Q_t = \widehat Y_t \widehat X_t^{-1} \doteq \Xi(\Sigma_t)$, where $\Xi:\R^{2n\times 2n}\rightarrow\R^{2n\times 2n}$ is defined by
\begin{align}
	& \Xi(\Sigma)
	\doteq\left[\ba{cc}
		\Xi^{11}(\Sigma) & \Xi^{12}(\Sigma)
		\\
		\Xi^{21}(\Sigma) & \Xi^{22}(\Sigma)
	\ea\right],
	\label{eq:Xi}
	\\
	& \dom(\Xi)
	\doteq
	\left\{\Sigma\in\R^{2n\times 2n}\, \left| \ba{c}
							\Sigma^{11}+\Sigma^{12}M\in\R^{n\times n}\\
							\text{invertible}
	\ea \right. \right\},
	\nn
\end{align}
using the notation of \er{eq:block}, with 
\begin{align}
	\Xi^{11}(\Sigma)
	& \doteq (\Sigma^{21}+\Sigma^{22}M)(\Sigma^{11}+\Sigma^{12}M)^{-1}\,,
	\nn\\
	\Xi^{12}(\Sigma)
	& \doteq % (U^{21} + U^{22} M) (U^{11} + U^{12} M)^{-1} 
	\Xi^{11}(\Sigma) \,
	\Sigma^{12} M - \Sigma^{22} M\,,
	\nn\\
	\Xi^{21}(\Sigma)
	& \doteq -M(\Sigma^{11}+\Sigma^{12}M)^{-1}\,,
	\nn\\
	\Xi^{22}(\Sigma)
	& \doteq % -M(U^{11}+U^{12}M)^{-1}
	\Xi^{21}(\Sigma)\,
	\Sigma^{12}M+M.
	\nn
\end{align}
As $M$ is invertible by Assumption \ref{ass:M}, it may be verified directly that $\Xi$ of \er{eq:Xi} is invertible, with $\Xi^{-1}:\R^{2n\times 2n}\rightarrow\R^{2n\times 2n}$ given by
\begin{align}
	& \Xi^{-1}(Q)
	\doteq\left[\ba{cc}
		(\Xi^{-1})^{11}(Q) & (\Xi^{-1})^{12}(Q)
		\\
		(\Xi^{-1})^{21}(Q) & (\Xi^{-1})^{22}(Q)
	\ea\right],
	\label{eq:inv-Xi}
	\\
	& \dom(\Xi^{-1})
	\doteq \left\{ Q\in\R^{2n\times 2n} \, \biggl| \, Q^{21}\in\R^{n\times n} \text{ invertible} \right\}.
	\nn
\end{align}
where
\begin{align}
	(\Xi^{-1})^{11}(Q)
	& \doteq -(Q^{21})^{-1}Q^{22}
	\nn\\
	(\Xi^{-1})^{12}(Q)
	& \doteq -(Q^{21})^{-1}(M-Q^{22})M^{-1}
	\nn\\
	(\Xi^{-1})^{21}(Q)
	& \doteq % -
	V^{11}\,
	% (V^{21})^{-1}V^{22}
	(\Xi^{-1})^{11}(Q)
	+Q^{12} 
	\nn\\
	(\Xi^{-1})^{22}(Q)
	& \doteq % -
	Q^{11}\,
	% (V^{21})^{-1}(M-V^{22})M^{-1}
	(\Xi^{-1})^{12}(Q)
	-Q^{12}M^{-1}\,.
	\nn
\end{align}
That is, \er{eq:symplectic-and-Q} holds. 
${ }^{ }$\hfill{\small $\blacksquare$}

\subsection{Proof of Lemma \ref{lem:ass-M-holds}}
\label{app:ass-M-holds}

%%		Lemma showing that Assumption \ref{ass:M} holds.

% \begin{proof}{\em (Lemma \ref{lem:ass-M-holds})}
Fix $M_0\in\cSym_{\ge 0}^{n\times n}$ as the stabilizing solution of ARE \er{eq:ARE} indicated in the lemma statement.
Let $t^*(M_0)\in\R_{>0}^+$ denote the maximal horizon of existence \er{eq:symplectic-escape} of the DRE 
\begin{align}
	\dot R_t
	& = A' R_t + R_t A + R_t B B' R_t + C'C, \quad R_0 = M_0.
	\label{eq:R-DRE}
\end{align}
As $M_0$ is the stabilizing solution of ARE \er{eq:ARE}, note that $R_t \doteq M_0$ is the unique solution of this DRE for all $t\in\R_{\ge 0}$. That is, $t^*(M_0) = +\infty$. Choose any invertible $M\in\cSym^{n\times n}$ such that \er{eq:ass-M-holds} holds, and note that such a choice is always possible. Recalling \er{eq:Q-11}, let $Q_t^{11}\in\cSym^{n\times n}$, $t\in[0,t^*(M))$ denote the unique solution of DRE \er{eq:Q-11} initialized with $Q_0^{11} = M$. As DREs \er{eq:Q-11} and \er{eq:R-DRE} are identical, Lemma \ref{lem:monotone} and \er{eq:ass-M-holds} imply that solutions $Q_t^{11}$ and $R_t$ satisfy the monotonicity property
\begin{align}
	& Q_t^{11} - R_t = Q_t^{11} - M_0 \in\cSym_{<0}^{n\times n}
	\label{eq:high-side}
\end{align}
for all $t\in[0,t^*(M))$. By inspection, this provides an upper bound for $Q_t^{11}$. In order to determine a lower bound, choose $w_s = 0$ for all $s\in[0,t]$ suboptimal in the definition \er{eq:value-S} of $S_t(x,0)$. Recalling \er{eq:op-S}, \er{eq:payoff}, \er{eq:S},
\begin{align}
	& \demi x' Q_t^{11} x = S_t(x,0)
	% \nn\\
	% & 
	\ge 
	% \int_0^t \demi |C \exp(A \, s) x|^2 \, ds + \demi x' \exp(A' t) \, M \exp(A \, t) x
	% \nn\\
	% & = 
	\demi x' O_t \, x,
	\label{eq:low-side}
\end{align}
in which $O_t\in\cSym^{n\times n}$ is well-defined by 
\begin{align}
	O_t 
	& \doteq \! \int_0^t \! \exp(A' s) C' C \exp(A\, s) \, ds + \exp(A' t) M \exp(A \, t)
	\nn
\end{align}
for all $t\in\R_{\ge 0}$. Note that $O_t\in\cSym^{n\times n}$ is finite for all $t\in\R_{\ge 0}$, and provides a lower bound for $Q_t^{11}\in\cSym^{n\times n}$. Hence, combining \er{eq:high-side} and \er{eq:low-side},
\begin{align}
	Q_t^{11}\in \cSym_{\ge O_t}^{n\times n} \cap \cSym_{<M_0}^{n\times n}
	\nn
\end{align}
for all $t\in[0,t^*(M))$. A simple contradiction argument subsequently implies that $Q_t^{11}\in\cSym^{n\times n}$ is finite for all $t\in\R_{\ge 0}$, so that $t^*(M) = +\infty$.
% \end{proof}
${ }^{ }$\hfill{\small $\blacksquare$}

%%		Controllability is enough for $Q_t>M$.

\subsection{Proof of Lemma \ref{lem:kernel-G-finite}}
\label{app:kernel-G-finite}

% \begin{proof}{\em (Lemma \ref{lem:kernel-G-finite})}
Suppose that Assumption \ref{ass:M} holds. Fix $x,y\in\R^n$, $t\in\R_{>0}$. Note that $G_t(x,y)\in\R^-$ by Theorem \ref{thm:op-G}.

{\em (Necessity)} Suppose that $G_t(x,y)\in\R$. Recalling the value function interpretation of $G_t(x,y)$, if the dynamics \er{eq:dynamics} are not controllable from $x$ to $y$ in time $t$, it immediately follows by definition \er{eq:kernel-G-def} that $G_t(x,y) = -\infty$. Hence, the dynamics \er{eq:dynamics} must be controllable from $x$ to $y$ in time $t$. Necessity follows as $x,y\in\R^n$ and $t\in\R_{>0}$ are arbitrary.

{\em (Sufficiency)} Suppose that dynamics \er{eq:dynamics} are controllable. Consequently, Lemma \ref{lem:controllable-Q22} implies that  $Q_t^{22} \in\cSym_{>M}^{n\times n}$, where $Q_t^{22}$ is as per \er{eq:Q-22}. Consequently, $S_t(x,\cdot)\in\cSvex{-M} = \dom(\op{D}_\varphi)$, so that $\op{D}_\varphi S_t(x,\cdot)\in\cSave{-M}$ is well defined. So, applying the semiconvex transform \er{eq:op-D} to $S_t(x,\cdot)$ yields
\begin{align}
	& (\op{D}_\varphi S_t(x,\cdot))(y)
	= -\! \int_{\R^n}^\oplus \varphi(\xi,y)\otimes (-S_t(x,\xi)) \, d\xi
	\nn\\
	& = -\! \int_{\R^n}^\oplus \demi \left[ \ba{c}
							\xi \\ y 
						\ea \right]' \mu(M) \left[ \ba{c}
							\xi \\ y 
						\ea \right] 
			- \demi \left[ \ba{c}
					x \\ \xi
				\ea \right]' Q_t \left[ \ba{c}
					x \\ \xi
				\ea \right]
			d\xi
	\nn\\
	& = -\! \int_{\R^n}^\oplus \!\! \demi \! \left[ \ba{c}
						x \\ y \\\hline \xi
					\ea \right]' \!\! \left[ \ba{cc|c}
						-Q_t^{11} & 0 & -Q_t^{12} 
						\\
						0 & +M & - M
						\\\hline
						&& \\[-4mm]
						-(Q_t^{12})' & -M & M - Q_t^{22}
					\ea \right] \!\! \left[ \ba{c}
						x \\ y \\\hline \xi
					\ea \right] \! d\xi
	\nn\\
	& = - \demi \left[ \ba{c}
				x \\ y
			\ea \right]' \left[ \ba{cc}
				-Q_t^{11} & 0
				\\
				0 & +M
			\ea \right] \left[ \ba{c} 
				x \\ y
			\ea \right]
	\nn\\
	& \quad
		+ \demi \left[ \ba{c}
				x \\ y
			\ea \right]' \left[ \ba{c}
				-Q_t^{12}
				\\
				-M
			\ea \right] (M - Q_t^{22})^{-1}
			\left[ \ba{c}
				-Q_t^{12}
				\\
				-M
			\ea \right]'
			\left[ \ba{c}
				x \\ y
			\ea \right]
	\nn\\
	& \doteq
	\demi \left[ \ba{c}
				x \\ y
			\ea \right]' \Lambda_t
		\left[ \ba{c}
				x \\ y
			\ea \right],
	\label{eq:pre-G-and-Lambda}
\end{align}
where $(M-Q_t^{22})^{-1}$ is guaranteed to exist by Lemma \ref{lem:controllable-Q22}, so that $\Lambda_t\in\R^{2n\times 2n}$ by definition. Hence, applying the right-hand equality of \er{eq:kernel-G-def} of Theorem \ref{thm:op-G},
\begin{align}
	G_t(x,y)
	& = \demi \left[ \ba{c}
				x \\ y
			\ea \right]' \Lambda_t
		\left[ \ba{c}
				x \\ y
			\ea \right] \in\R,
	\label{eq:G-and-Lambda}
\end{align}
thereby completing the proof.
% \end{proof}
${ }^{ }$\hfill{\small $\blacksquare$}

%%		Lemma stating that controllability implies $Q_t^22-M>0$.

\begin{lemma}
\label{lem:controllable-Q22}
Under Assumption \ref{ass:M}, controllability of the dynamics \er{eq:dynamics} implies that $Q_t^{22} \in\cSym_{>M}^{n\times n}$ for all $t\in\R_{>0}$. 
\end{lemma}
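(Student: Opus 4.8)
The plan is to combine the integral form of \er{eq:Q-22} with the controllability of $(A,B)$. First I would record that Assumption \ref{ass:M} together with \er{eq:equal-escape} ensures $Q_s^{11}$, $Q_s^{12}$, $Q_s^{22}$ exist for all $s\in\R_{\ge 0}$, so that integrating \er{eq:Q-22} from $Q_0^{22}=M$ gives
\begin{align}
	Q_t^{22}-M
	& = \int_0^t (B'Q_s^{12})'\,(B'Q_s^{12})\,ds\in\cSym_{\ge 0}^{n\times n}\,.
	\nn
\end{align}
Hence the whole content of the lemma is strictness of this inequality, namely that no $v\in\R^n\setminus\{0\}$ can satisfy $v'(Q_t^{22}-M)\,v=0$ for some $t>0$.

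I would establish this by contradiction. If such a $v$ existed, then $B'Q_s^{12}v=0$ for all $s\in[0,t]$ by continuity of $s\mapsto Q_s^{12}$. I would then set $\eta_s\doteq Q_s^{12}v$; from \er{eq:Q-12} and symmetry of $Q_s^{11}$ the vector $\eta_s$ solves $\dot\eta_s=(A'+Q_s^{11}BB')\,\eta_s$, and the identity $B'\eta_s\equiv 0$ on $[0,t]$ collapses this to the \emph{linear, $Q$-free} equation $\dot\eta_s=A'\eta_s$ with $\eta_0=Q_0^{12}v=-Mv$. Therefore $\eta_s=e^{A's}(-Mv)$ and $B'e^{A's}Mv=0$ for all $s\in[0,t]$.

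Then I would differentiate this last identity repeatedly at $s=0$ and invoke the Cayley--Hamilton theorem to obtain $(Mv)'\bigl[\,B\;\;AB\;\;\cdots\;\;A^{n-1}B\,\bigr]=0$; controllability of $(A,B)$ (Assumption \ref{ass:controllable}) then forces $Mv=0$, and invertibility of $M$ (Assumption \ref{ass:M}) forces $v=0$, contradicting $v\ne0$. This would yield $Q_t^{22}-M\in\cSym_{>0}^{n\times n}$, i.e. $Q_t^{22}\in\cSym_{>M}^{n\times n}$, for every $t>0$. I do not anticipate a genuine difficulty here; the only step needing a little care is the collapse to $\dot\eta_s=A'\eta_s$, which rests both on the algebraic fact that $B'\eta_s\equiv 0$ annihilates the feedback term $Q_s^{11}BB'\eta_s$ and on $Q_s^{11}$ being defined throughout $[0,t]$ --- precisely what Assumption \ref{ass:M} guarantees.
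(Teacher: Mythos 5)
Your proof is correct, and it takes a genuinely different route from the paper's. The paper's proof is structural: it constructs the evolution operator for the feedback-modified dynamics $(A+BB'Q^{11}_{t-\cdot},B)$, identifies $Q_t^{22}-M = M\,\op{C}_t\,M$ where $\op{C}_t$ is the controllability gramian of that pair, and then cites the fact that controllability is preserved under state feedback to conclude $\op{C}_t\in\cSym_{>0}^{n\times n}$. You instead argue by contradiction from the null space of $Q_t^{22}-M$: the key observation that $B'\eta_s\equiv 0$ annihilates the feedback term $Q_s^{11}BB'\eta_s$ collapses the closed-loop equation $\dot\eta_s=(A'+Q_s^{11}BB')\eta_s$ to the \emph{open-loop} one $\dot\eta_s=A'\eta_s$, so you never need to invoke feedback-invariance of controllability at all --- a finite-differentiation/Cayley--Hamilton argument on $B'e^{A's}Mv=0$ finishes it directly. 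What your route buys is a more elementary, self-contained argument using only the rank criterion for $(A,B)$; what the paper's route buys is the explicit gramian identity $Q_t^{22}-M=M\op{C}_t M$, which is structurally informative and is the form later exploited in the fundamental-solution semigroup. Both require, as you correctly flag, that $Q_s^{11}$ (hence $Q_s^{12}$) exist on all of $[0,t]$, which is exactly what Assumption \ref{ass:M} and \er{eq:equal-escape} provide.
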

\begin{proof}{\em (Lemma \ref{lem:controllable-Q22})}
With $M\in\cSym^{n\times n}$ satisfying Assumption \ref{ass:M}, recall that $t^*(M) = +\infty$ as per \er{eq:equal-escape}. Consequently, the optimal dynamics associated with $S_t(x,y)$ of \er{eq:value-S}, \er{eq:S} are well-defined by the time-dependent ODE
\begin{align}
	\dot x_s^*
	& = (A + B B' Q_{t-s}^{11} ) \, x_s^*\,,
	\qquad x_0 = x\,,
	\label{eq:x-star}
\end{align}
for all $s\in[0,t]$. Let $\op{V}_t:\Delta_{0,t}\rightarrow\R^{n\times n}$ denote the evolution operator associated with \er{eq:x-star}, with $\Delta_{0,t} \doteq \{ (r,s)\in\R_{\ge 0}^2 \, \bigl| \, 0\le r \le s \le t \}$. By definition, see for example \cite[Proposition 3.6, p.138]{BDDM:07},
\begin{equation}
	\begin{aligned}
	\op{V}_t(\sigma,\sigma) & = I\,,
	\\
	\ts{\pdtone{}{s}} \op{V}_t(s,\sigma) & = (A + B B' Q_{t-s}^{11} ) \, \op{V}_t(s,\sigma)\,,
	\\
	\ts{\pdtone{}{\sigma}} \op{V}_t(s,\sigma) & = -\op{V}_t(s,\sigma) \, (A + B B' Q_{t-\sigma}^{11} )\,,
	\end{aligned}
	\label{eq:op-V-properties}
\end{equation}
for all $(s,\sigma)\in\Delta_{0,t}$. Define $\op{U}_t:\Delta_{0,t}\rightarrow\R^{n\times n}$ via \er{eq:op-V-properties} by
\begin{align}
	\op{U}_t(r,\tau)
	& \doteq \op{V}_t(t-\tau,t-r)'
	\label{eq:op-U}
\end{align}
for all $(r,\tau)\in\Delta_{0,t}$. By inspection of \er{eq:op-V-properties}, \er{eq:op-U},
\begin{equation}
	\begin{aligned}
	\op{U}_t(\tau,\tau) & = I\,,
	\\
	\ts{\pdtone{}{r}} \op{U}_t(r,\tau) & = [\ts{\pdtone{}{\sigma}} \op{V}_t(s,\sigma) \bigl|_{(s,\sigma)=(t-\tau, t-r)}]' \, (-1) 
	\\
	& = (A + B B' Q_{r}^{11} )' \, \op{V}_t(t-\tau,t-r)'
	\\
	& = (A + B B' Q_{r}^{11} )' \, \op{U}_t(r,\tau)\,,
	\\
	\ts{\pdtone{}{\tau}} \op{U}_t(r,\tau) & = [\ts{\pdtone{}{s}} \op{V}_t(s,\sigma) \bigl|_{(s,\sigma)=(t-\tau, t-r)}]' \, (-1) 
	\\
	& = - \op{V}_t(t-\tau,t-r)' (A + B B' Q_{\tau}^{11})' 
	\\
	& = - \op{U}_t(r,\tau) \, (A + B B' Q_{\tau}^{11})' 
	\end{aligned}
\end{equation}
That is, $\op{U}_t:\Delta_{0,t}\rightarrow\R^{n\times n}$ is the evolution operator for the dynamics associated with $(A + B B' Q_{s}^{11})'$, $s\in[0,t]$. Comparing with \er{eq:Q-12}, it immediately follows that $Q_s^{12} = - \op{U}_t(s,0) M$ for all $s\in[0,t]$. Hence, \er{eq:Q-22} implies that
\begin{align}
	& Q_t^{22} - M
	= \int_0^t (Q_s^{12})' B B' Q_s^{12} \, ds
	\nn\\
	& \quad = \int_0^t M \, \op{U}_t(s,0)' B B' \, \op{U}_t(s,0) M \, ds
	% \nn\\
	% & 
	= M \, \op{C}_t \, M
	\label{eq:Q-22-minus-M}
\end{align}
where $\op{C}_t \doteq \int_0^t \op{V}_t(t,t-s) B B' \, \op{V}_t(t,t-s)\, ds\in\cSym_{\ge 0}^{n\times n}$ is the controllability gramian for the pair $(A+B B' Q_{t-\cdot}^{11}, B)$ on $[0,t]$, by definition of $\op{V}_t$. However, recall that controllability is preserved under state feedback, see for example \cite[p.48]{AM:71}. Hence, $(A,B)$ completely controllable implies that $(A+B B' Q_{t-\cdot}^{11}, B)$ is completely controllable, which in turn implies that $\op{C}_t$ is invertible for $t\in\R_{>0}$. That is, $\op{C}_t\in\cSym_{>0}^{n\times n}$ for all $t\in\R_{>0}$. As $M$ is invertible by Assumption \ref{ass:M}, the assertion immediately follows by \er{eq:Q-22-minus-M}.
\end{proof}

%%		Proof of Theorem \ref{thm:Lambda-and-Q}.

\subsection{Proof of Theorem \ref{thm:Lambda-and-Q}}
\label{app:Lambda-and-Q}

% \begin{proof}{\em (Theorem \ref{thm:Lambda-and-Q})}
Fix any $t\in\R_{>0}$, $x\in\R^n$. Applying Lemma \ref{lem:kernel-G-finite}, and in particular \er{eq:pre-G-and-Lambda}, \er{eq:G-and-Lambda}, it follows immediately that $Q_t\in\R^{2n\times 2n}$, $\Lambda_t\in\cSym^{2n\times 2n}$ of \er{eq:Q-DRE}, \er{eq:pre-G-and-Lambda} are related via
\begin{align}
	Q_t & = \Pi(\Lambda_t)\,,
	\qquad
	\Lambda_t = \Pi^{-1}(Q_t)\,,
	\nn
\end{align}
with matrix operators $\Pi, \Pi^{-1}:\cSym^{2n\times 2n}\rightarrow\cSym^{2n\times 2n}$ defined using the notation of \er{eq:block} by
\begin{align}
	& \Pi(\Lambda)
	\doteq
	\nn\\
	& \! \left[ \!\! \ba{cc} 
		\Lambda^{11} - \Lambda^{12}(M+ \Lambda^{22})^{-1} (\Lambda^{12})' & \!\! \Lambda^{12} (M+ \Lambda^{22})^{-1} M
		\\
		M(M+ \Lambda^{22})^{-1} (\Lambda^{12})' & \!\! M - M ( M + \Lambda^{22})^{-1} M
	\ea \!\! \right]
	\nn\\
	& \dom(\Pi)
	\doteq  \left\{\Lambda\in\cSym^{2n\times 2n}\, \biggl| \, \Lambda^{22}\in\cSym_{<-M}^{n\times n}\right\},
	\label{eq:Pi}
	\\
	& \Pi^{-1}(Q)
	\doteq 
	\nn\\
	& \! \left[ \!\! \ba{cc}
		Q^{11} \! + Q^{12}(M - Q^{22})^{-1} (Q^{12})' & \!\!\! Q^{12} ( M - Q^{22})^{-1}M
		\\
		M (M - Q^{22})^{-1} (Q^{12})' & \!\!\! M ( M - Q^{22})^{-1} M \! - M
	\ea \!\! \right]
	\nn\\
	& \dom(\Pi^{-1})
	\doteq \left\{Q\in\cSym^{2n\times 2n}\, \biggl| \, Q^{22}\in\cSym_{>M}^{n\times n}\right\}.
	\label{eq:inv-Pi}
\end{align}
It may be verified directly that $\Pi\circ\Pi^{-1}$ is the identity.
% \end{proof}

%%		Proof of the characterization of $P_t$ in terms of $\Lambda_t$.

\subsection{Proof of Theorem \ref{thm:P-and-Lambda}}
\label{app:P-and-Lambda}

Throughout, it is assumed that Assumptions \ref{ass:M} and \ref{ass:controllable} hold, with $M\in\cSym^{n\times n}$ specified by the former, as per the theorem statement. Note in particular that $t^*(M) = +\infty$, so that $(\Sigma_t^{11} + \Sigma_t^{12}\, M)^{-1}$ exists for all $t\in\R_{\ge 0}$, where $\Sigma_t$ is the symplectic fundamental solution identified in \er{eq:symplectic-XY}. Consequently, $Q_t\in\cSym^{2n\times 2n}$ is well-defined as the unique solution of DRE \er{eq:Q-DRE}, \er{eq:Q-IC}, for all $t\in\R_{\ge 0}$ by Assumption \ref{ass:M}, see Theorem \ref{thm:symplectic-and-Q} and its proof. Note that $P_0\in\cSym_{>M}^{n\times n} = \dom(\Upsilon)$ by hypothesis and \er{eq:Upsilon}.

The proof proceeds by demonstrating a sequence of implications concerning the following claims, posed with respect to arbitrary fixed $t\in\R_{>0}$ and $P_0\in\cSym_{>M}^{n\times n}$:
\begin{enumerate}[\em 1)]\itemsep=1mm
\item $t\in (0,t^*(P_0))$;
\item $\Upsilon(P_0) + Q_s^{22}\in\cSym_{<0}^{n\times n}$ for all $s\in(0,t]$;
\item $\Upsilon(P_0) + Q_t^{22}\in\cSym_{<0}^{n\times n}$;
\item $P_0 + \Lambda_t^{22}\in\cSym_{<0}^{n\times n}$; and
\item \er{eq:P-and-Lambda} and \er{eq:max-plus-escape} hold.
\end{enumerate}
In particular, it is shown that {\em 1) $\Leftrightarrow$ 2) $\Leftrightarrow$ 3) $\Leftrightarrow$ 4) $\Rightarrow$ 5)}.

{\em 2) $\Rightarrow$ 1):} Suppose that $\Upsilon(P_0) + Q_s^{22} \in\cSym_{<0}^{n\times n}$ for all $s\in(0,t]$. Applying \er{eq:Upsilon} and Theorem \ref{thm:symplectic-and-Q}, % it is invertible, and satisfies
\begin{align}
	& M^{-1} ( \Upsilon(P_0) + Q_s^{22} ) \, M^{-1}
	\nn\\
	& = (M - P_0)^{-1} - (\Sigma_s^{11} + \Sigma_s^{12} M)^{-1} \Sigma_s^{12}
	\label{eq:1=>2-eq-3}
\end{align}
where it may be noted that the inverses on the right-hand side are guaranteed to exist. By hypothesis, the left-hand side is invertible, so that a matrix $K_s\in\R^{n\times n}$ is well-defined for an arbitrary $s\in(0,t]$ by
\begin{align}
	K_s
	& \doteq 
	(\Sigma_s^{11} + \Sigma_s^{22} M)^{-1} + 
	(\Sigma_s^{11} + \Sigma_s^{22} M)^{-1} \Sigma_t^{12}
	\nn\\
	& \qquad \times \left[
		 (M - P_0)^{-1} - (\Sigma_s^{11} + \Sigma_s^{12} M)^{-1} \Sigma_s^{12}
	\right]^{-1}
	\nn\\
	& \qquad \times (\Sigma_s^{11} + \Sigma_s^{22} M)^{-1}\,.
	\nn
\end{align}
However, the Woodbury Lemma implies that
\begin{align}
	K_s
	& = \left[ (\Sigma_s^{11} + \Sigma_s^{22} M)  - \Sigma_s^{12} (M - P_0) \right]^{-1}
	\nn\\
	& = (\Sigma_s^{12} + \Sigma_s^{12} P_0)^{-1}\,.
	\nn
\end{align}
That is, $\Sigma_s^{12} + \Sigma_s^{12} P_0\in\cSym^{n\times n}$ is invertible. Recalling \er{eq:symplectic-escape}, and that $s\in(0,t]$ is arbitrary, immediately implies that {\em 1)} holds. 

%%		1) ==> 2)

{\em 1) $\Rightarrow$ 2):} 
Fix an arbitrary $t\in(0,t^*(P_0))$. Analogously to the proof of Theorem \ref{thm:symplectic-and-Q}, let $\widetilde Q_s\in\cSym^{2n\times 2n}$ denote the unique solution of DRE \er{eq:Q-DRE} subject to the initialization 
\begin{align}
	& \widetilde Q_0 = \mu(P_0)
	\label{eq:Q-tilde-IC}
\end{align}
defined, via \er{eq:mu}, for all $s\in [0, t^*(\widetilde Q_0))$, where $t^*(\widetilde Q_0)\in\R_{>0}$ is the corresponding maximal horizon of existence \er{eq:symplectic-escape}. Analogously to the argument yielding \er{eq:equal-escape}, observe that $t^*(\widetilde Q_0) = t^*(P_0)$, so that $t\in(0,t^*(\widetilde Q_0))$. An application of the symplectic fundamental solution \er{eq:symplectic-P}, \er{eq:symplectic-XY}, \er{eq:symplectic-hat}, yields 
\begin{align}
	& \widetilde Q_s = \widetilde Y_s \widetilde X_s^{-1}
	\label{eq:Q-tilde}
\end{align}
for all $s\in[0,t]$, in which
\begin{align}
	& \left[ \ba{c}
		\widetilde X_s
		\\[-4mm]
		\\\hline 
		\\[-3mm]
		\widetilde Y_s
	\ea \right] 
	\doteq \widehat\Sigma_s \left[ \ba{c}
					I \\\hline \\[-3mm] \mu(P_0)
				\ea \right]
	\nn\\
	& 
	= \left[ \ba{cc|cc}
		\Sigma_s^{11} & 0 & \Sigma_s^{12} & 0
		\\
		0 & I & 0 & 0 
		\\\hline
		& \\[-3mm]
		\Sigma_s^{21} & 0 & \Sigma_s^{22} & 0 
		\\
		0 & 0 & 0 & I
	\ea \right] \left[ \ba{cc}
		I & 0 \\
		0 & I \\\hline
		& \\[-3mm]
		+P_0 & -M \\
		-M & +M
	\ea \right]
	\nn\\
	& = \left[ \ba{cc}
			\Sigma_s^{11} + \Sigma_s^{12} P_0 & -\Sigma_s^{12} M
			\\
			0 & I
			\\\hline
			& \\[-3mm]
			\Sigma_t^{21} + \Sigma_s^{22} P_0 & -\Sigma_s^{22} M
			\\
			-M & +M
	\ea \right] \in \R^{4n \times 2n}.
	\nn
\end{align}
for all $s\in[0,t]$. In particular,
\begin{align}
	\widetilde X_s^{-1}
	& = \left[ \ba{cc}
			(\Sigma_s^{11} + \Sigma_s^{12} P_0)^{-1} & (\Sigma_s^{11} + \Sigma_s^{12} P_0)^{-1} \Sigma_s^{12} M
			\\
			0 & I
	\ea \right],
	% \label{eq:pre-Q-t-3}
	\nn
\end{align}
in which $(\Sigma_s^{11} + \Sigma_s^{12} P_0)^{-1}$ is well-defined for all $s\in[0,t]$, as $t\in(0,t^*(P_0))$, see \er{eq:symplectic-escape}. Consequently, recalling \er{eq:block}, \er{eq:Q-tilde},
\begin{align}
	\widetilde Q_s^{22}
	& = M - M (\Sigma_s^{11} + \Sigma_s^{12} P_0)^{-1} \Sigma_s^{12} M
	\label{eq:1=>2-eq-1}
\end{align}
is well-defined for all $s\in[0,t]$. Recalling \er{eq:Q-IC} and \er{eq:Q-tilde-IC}, as $\widetilde Q_0 = \mu(P_0) \ge \mu(M) = Q_0$, monotonicity of DRE solutions (see for example Lemma \ref{lem:monotone}) implies that $\widetilde Q_s - Q_s\in\cSym_{\ge 0}^{2n\times 2n}$, so that in particular 
\begin{align}
	& \widetilde Q_s^{22} - Q_s^{22}\in\cSym_{\ge 0}^{n\times n}
	\label{eq:monotone-tilde}
\end{align}
for all $s\in[0,t]$. Fix an arbitrary $s\in(0,t]$. Rearranging \er{eq:1=>2-eq-1} and applying \er{eq:monotone-tilde}, Theorem \ref{thm:Lambda-and-Q}, and Lemma \ref{lem:controllable-Q22},
\begin{align}
	& (\Sigma_s^{11} + \Sigma_s^{12} P_0)^{-1} \Sigma_s^{12}
	= M^{-1} ( M - \widetilde Q_s^{22} ) \, M^{-1}
	\nn\\
	& \le M^{-1} (M - Q_s^{22} )\, M^{-1}\in\cSym_{<0}^{n\times n}\,.
	\label{eq:1=>2-eq-2}
\end{align}
% By definition of $P_0\in\cSym_{>M}^{n\times n} = \dom(\Upsilon)$, see \er{eq:Upsilon}, note that $\Upsilon(P_0) + Q_s^{22}$ is well-defined. 
Theorem \ref{thm:symplectic-and-Q} and \er{eq:Xi} implies via the notation of \er{eq:block} that
\begin{align}
	Q_s^{22}
	& =  [\Xi(\Sigma_s)]^{22} = M - M(\Sigma_s^{11} + \Sigma_s^{12} M)^{-1} \Sigma_s^{12} M.
	\label{eq:Q22-and-Xi22}
\end{align}
Recall that $\Sigma_s\in\dom(\Xi)$ (ie. the inverse involved is guaranteed to exist) by Assumption \ref{ass:M}, as $s\in(0,t^*(M)) \equiv \R_{>0}$. % Consequently, \er{eq:Upsilon} and \er{eq:Q22-and-Xi22} imply that \er{eq:1=>2-eq-3} holds. 
Furthermore, as $s\in(0,t^*(P_0))$, definition \er{eq:symplectic-escape} implies that $\Sigma_s^{11} + \Sigma_s^{12} P_0$ is invertible. Hence, a matrix $L_s\in\cSym^{n\times n}$ is well-defined by
\begin{align}
	& L_s
	\doteq (M - P_0) + (M-P_0) (\Sigma_s^{11} + \Sigma_s^{12} P_0)^{-1} \Sigma_s^{12} (M-P_0)
	\nn\\
	& = (M - P_0) + (M-P_0) 
	\nn\\
	& \qquad \times 
	\left[ (\Sigma_s^{11} + \Sigma_s^{12} M) - \Sigma_s^{12} (M - P_0) \right]^{-1}
	% \nn\\
	% & \qquad \times 
	\Sigma_s^{12} \, (M-P_0).
	\nn
\end{align}
where the second equality follows by adding and subtracting $\Sigma_s^{12} M$ within the inverse. Applying \er{eq:1=>2-eq-2}, and the fact that $P_0\in\cSym_{>M}^{n\times n}$, note that $L_s\in\cSym_{<0}^{n\times n}$ by definition. The Woodbury Lemma subsequently implies that
\begin{align}
	L_s
	& = \left[ (M-P_0)^{-1} - (\Sigma_s^{11} + \Sigma_s^{12} M)^{-1} \Sigma_s^{12} \right]^{-1}
	\nn\\
	& = M ( \Upsilon(P_0) + Q_s^{22} )^{-1} M
	\nn
\end{align}
where the second equality follows as per \er{eq:1=>2-eq-3}. Consequently, as $M\in\cSym^{n\times n}$ is invertible and $L_s\in\cSym_{<0}^{n\times n}$,
\begin{align}
	& 
	\Upsilon(P_0) + Q_s^{22}
	= M L_s^{-1} \, M \in\cSym_{<0}^{n\times n} .
	\nn
\end{align}
As $s\in(0,t]$ is arbitrary, claim {\em 2)} immediately follows.

%%		2) ==> 3)

{\em 2) $\Rightarrow$ 3):} By hypothesis, $\Upsilon(P_0) + Q_s^{22}\in\cSym_{<0}^{n\times n}$ for all $s\in(0,t]$. Selecting $s=t$ yields claim {\em 3)} as required.

%%		3) ==> 2)

{\em 3) $\Rightarrow$ 2):} By hypothesis, $\Upsilon(P_0) + Q_t^{22}\in\cSym_{<0}^{n\times n}$. Furthermore, $\Upsilon(P_0)\in\cSym_{<-M}^{n\times n}$ by \er{eq:Upsilon}. Hence, $Q_t^{22}\in\cSym^{n\times n}$, so that $(Q_\sigma^{12})' B B' Q_\sigma^{12}$ must be integrable with respect to $\sigma\in[0,t]$ by definition \er{eq:Q-22}. In particular,
\begin{align}
	Q_t^{22} - M
	& = \int_0^t (Q_\sigma^{12})' B B' Q_\sigma^{12} \, d\sigma
	\nn\\
	& \ge \int_0^s (Q_\sigma^{12})' B B' Q_\sigma^{12} \, d\sigma
	= Q_s^{22} - M
	\nn
\end{align}
for any fixed $s\in(0,t]$.
Hence, $Q_s^{22} - Q_t^{22}\in\cSym_{\le 0}^{n\times n}$, so that
\begin{align}
	\Upsilon(P_0) + Q_s^{22}
	& = (\Upsilon(P_0) + Q_t^{22}) + (Q_s^{22}  - Q_t^{22})\in\cSym_{<0}^{n\times n}.
	\nn
\end{align}
Recalling that $s\in(0,t]$ is arbitrary yields claim {\em 2)} as required.

{\em 3) $\Rightarrow$ 4):} Recalling \er{eq:Upsilon} and Theorem \ref{thm:Lambda-and-Q}, see \er{eq:kernel-G-and-Q}, \er{eq:Pi},
\begin{align}
	& \Upsilon(P_0) + Q_t^{22}
	= (-M - M(P_0-M)^{-1} M) 
	\nn\\
	& \hspace{30mm} + (M - M( M + \Lambda_t^{22} )^{-1} M)
	\nn\\
	& = M \left[ (M-P_0)^{-1} - ( M + \Lambda_t^{22} )^{-1} \right] M.
	\label{eq:3=>4-eq-1}
\end{align}
Recalling that $\Upsilon(P_0) + Q_t^{22}\in\cSym_{<0}^{n\times n}$ by hypothesis,
\begin{align}
	\Upsilon(P_0) & + Q_t^{22}\in\cSym_{<0}^{n\times n}
	\nn\\
	& \Leftrightarrow (M-P_0)^{-1} - ( M + \Lambda_t^{22} )^{-1}\in\cSym_{<0}^{n\times n}
	\nn\\
	& \Leftrightarrow (M + \Lambda_t^{22}) - (M-P_0) \in\cSym_{<0}^{n\times n}
	\nn\\
	& \Leftrightarrow P_0 + \Lambda_t^{22}\in\cSym_{<0}^{n\times n}\,.
	\label{eq:3=>4-eq-2}
\end{align}
That is, claim {\em 4)} holds.

{\em 4) $\Rightarrow$ 3):} Note that \er{eq:3=>4-eq-1} holds as per the {\em 3) $\Rightarrow$ 4)} case above. By hypothesis, $P_0 + \Lambda_t^{22}\in\cSym_{<0}^{n\times n}$. Hence, the string of equivalences \er{eq:3=>4-eq-2} implies that {\em 3)} holds.

{\em 4) $\Rightarrow$ 5):} Recalling \er{eq:op-G} and \er{eq:kernel-G-and-Q}, the value function $W_t$ of \er{eq:value-W}, \er{eq:W} satisfies
\begin{align}
	& W_t(x)
	= \demi x' P_t x
	= \int_{\R^n}^\oplus G_t(x,y)\otimes \Psi(y)\, dy
	\nn\\
	& = \int_{\R^n}^\oplus \demi \left[ \ba{c}
			x \\ y 
	\ea \right]' \Lambda_t  \left[ \ba{c}
			x \\ y 
	\ea \right] \otimes \demi y' P_0 \, y \, dy
	\nn\\
	& = \demi\int_{\R^n}^\oplus \left[ \ba{c}
							x \\ y
						\ea \right]' \left[ \ba{cc}
							\Lambda_t^{11} & \Lambda_t^{12}
							\\
							(\Lambda_t^{12})' & P_0 + \Lambda_t^{22}
						\ea \right] \left[ \ba{c}
							x \\ y 
						\ea \right] dy
	\nn% \\
	% & = \demi x' \, [ \Lambda_t^{11} - \Lambda_t^{12}\, (\widetilde M + \Lambda_t^{22})^{-1} \Lambda_t^{21} ] \, x\,,
	% \nn
\end{align}
for all $x\in\R^n$. By hypothesis, $P_0 + \Lambda_t^{22}\in\cSym_{<0}^{n\times n}$, so that $(P_0 + \Lambda_t^{22})^{-1}$ exists. Hence, the above max-plus integration explicitly evaluates as
\begin{align}
	\demi x' P_t\, x
	& = \demi x' \left[ \Lambda_t^{11} - \Lambda_t^{12} ( P_0 + \Lambda_t^{22} )^{-1} (\Lambda_t^{12})' \right] x.
	\nn
\end{align}
As $x\in\R^n$ is arbitrary, \er{eq:P-and-Lambda} follows immediately. In addition, as {\em 4) $\Leftrightarrow$ 1)}, it immediately follows that 
\begin{align}
	& \sup\left\{ t\in\R_{>0} \left| P_0 + \Lambda_t^{22}\in\cSym_{<0}^{n\times n} \right. \right\}
	\nn\\
	& = \sup\left\{ t\in\R_{>0} \bigl| t\in(0,t^*(P_0)) \right\}
	% \nn\\
	% & 
	= 
	t^*(P_0).
	\nn
\end{align}
That is, \er{eq:max-plus-escape} holds.
${ }^{ }$\hfill{\small $\blacksquare$}

\begin{lemma}
\label{lem:monotone}
Given initializations $P_0, \widetilde P_0\in\cSym^{n\times n}$ satisfying $P_0 - \widetilde P_0\in\cSym_{\le 0}^{n\times n}$, the respective unique solutions $P_s, \widetilde P_s\in\cSym^{2n\times 2n}$ of DRE \er{eq:DRE} defined for all $s\in[0,t^*)$, $t^*\doteq\min(t^*(P_0), t^*(\widetilde P_0))$ satisfy
\begin{align}
	& P_s - \widetilde P_s \in\cSym_{\le 0}^{n \times n}
	\label{eq:monotone}
\end{align}
for all $s\in[0,t^*)$.
\end{lemma}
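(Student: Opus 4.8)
The plan is to reduce the claim to the elementary fact that a congruence transformation preserves negative semidefiniteness. Write $t^*\doteq\min(t^*(P_0),t^*(\widetilde P_0))$ and set $\Delta_s\doteq P_s-\widetilde P_s\in\cSym^{n\times n}$ for $s\in[0,t^*)$; by hypothesis $\Delta_0\in\cSym_{\le0}^{n\times n}$, and $\Delta_s$ is continuously differentiable on $[0,t^*)$ since both $P_s$ and $\widetilde P_s$ are solutions of DRE \er{eq:DRE} there.

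First I would subtract the two copies of \er{eq:DRE}, noting that the constant terms $C'C$ cancel, to obtain
\begin{align}
	\dot\Delta_s
	& = A'\Delta_s + \Delta_s A + \left( P_s\, BB'P_s - \widetilde P_s\, BB'\widetilde P_s \right).
	\nn
\end{align}
Introducing $M_s\doteq\demi(P_s+\widetilde P_s)$ and substituting $P_s = M_s+\demi\Delta_s$, $\widetilde P_s = M_s-\demi\Delta_s$, the quartic terms cancel exactly and the cross term collapses to $M_s BB'\Delta_s + \Delta_s BB'M_s$, so that
\begin{align}
	\dot\Delta_s
	& = \mathcal{A}_s'\,\Delta_s + \Delta_s\,\mathcal{A}_s,
	\qquad
	\mathcal{A}_s\doteq A + BB'M_s,
	\nn
\end{align}
a homogeneous, time-varying Lyapunov differential equation. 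Since $s\mapsto\mathcal{A}_s$ is continuous on $[0,t^*)$, the transition matrix $\Phi(\cdot,0)$ of the linear system $\dot z_s = \mathcal{A}_s'z_s$ (i.e. $\pdtone{}{s}\Phi(s,0) = \mathcal{A}_s'\Phi(s,0)$ with $\Phi(0,0)=I$) exists on $[0,t^*)$, and differentiation shows that the unique solution of the Lyapunov equation above with the given initial datum is the congruence $\Delta_s = \Phi(s,0)\,\Delta_0\,\Phi(s,0)'$. As $\Delta_0\in\cSym_{\le0}^{n\times n}$, it follows that $\Delta_s\in\cSym_{\le0}^{n\times n}$ for all $s\in[0,t^*)$, which is \er{eq:monotone}.

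I expect no serious obstacle here: the only points requiring a little care are the algebraic step recasting the difference equation as a homogeneous Lyapunov equation -- in particular the exact cancellation of the quartic terms -- and the remark that $\mathcal{A}_s$, being assembled from the $C^1$ solutions $P_s,\widetilde P_s$, is continuous on the whole of $[0,t^*)$, so that $\Phi(s,0)$ is defined there. An alternative route would be to identify $\demi x'P_t x$ with the value function $\sup_w J$ of \er{eq:payoff}, \er{eq:value-W} and invoke monotonicity of the supremum in the terminal payoff $\demi x'P_0 x$; this is slicker, but only applies directly when the relevant value functions are finite (e.g. for initial data in $\cSym_{>M}^{n\times n}$), whereas the linearization argument above handles the arbitrary symmetric initializations in the statement.
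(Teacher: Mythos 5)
Your proposal is correct and follows essentially the same route as the paper: the paper also linearizes the difference around the averaged coefficient matrix $A + \demi BB'(P_\sigma+\widetilde P_\sigma)$ (misprinted there with hats) and proves that the congruence $\op{T}(s,\sigma)(P_\sigma-\widetilde P_\sigma)\op{T}(s,\sigma)'$ is constant in $\sigma$, which is precisely the integrated form of your Lyapunov ODE $\Delta_s=\Phi(s,0)\Delta_0\Phi(s,0)'$. (One tiny slip in wording: the terms you call ``quartic'' are the ones quadratic in $\Delta_s$, namely $\tfrac14\Delta_s BB'\Delta_s$.)
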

\begin{proof}
Fix $s\in[0,t^*)$. Recalling the notation of the proof of Theorem \ref{thm:Lambda-and-Q}, let $\op{T}:\Delta_{0,t}\rightarrow\R^{n\times n}$ denote the evolution operator associated with the time-dependent ODE
\begin{align}
	\dot Y_\sigma
	& = (\hat A + \demi \hat B \hat B' (P_\sigma + \widetilde P_\sigma))' \, Y_\sigma,
	\nn 
\end{align}
defined for $\sigma\in[0,s]$. In particular, note that
\begin{align}
	\op{T} (\sigma,\sigma)
	& = I\,,
	\nn\\
	\pdtone{}{s}\op{T}(s,\sigma) 
	& = (\hat A + \demi \hat B \hat B' (P_s + \widetilde P_s))' \, \op{T}(s,\sigma) 
	\nn\\
	\pdtone{}{\sigma}\op{T}(s,\sigma) 
	& = - \op{T}(s,\sigma) \, (\hat A + \demi \hat B \hat B' (P_\sigma + \widetilde P_\sigma))'
	\nn	
\end{align}
for all $\sigma\in[0,s]$. Define $\pi:[0,s]\rightarrow\cSym^{n\times n}$ by
\begin{align}
	\pi_\sigma
	& \doteq \op{T}(s,\sigma)\, ( P_\sigma - \widetilde P_\sigma ) \, \op{T}(s,\sigma)'
	\label{eq:monotone-pi}
\end{align}
for all $\sigma\in[0,s]$. Differentiating with respect to $\sigma$,
\begin{align}
	\dot\pi_\sigma & = \ts{\pdtone{}{\sigma}} \op{T}(s,\sigma) \, (P_\sigma - \widetilde P_\sigma) \, \op{T}(s,\sigma)' 
	\nn\\
	& \qquad 
	+ \op{T}(s,\sigma) \, (\dot P_\sigma - \dot{\widetilde P}_\sigma) \, \op{T}(s,\sigma)'
	\nn\\
	& \qquad 
	+ \op{T}(s,\sigma) (P_\sigma - \widetilde P_\sigma) \ts{\pdtone{}{\sigma}} \op{T}(s,\sigma)' 
	\nn\\
	& 
	= \op{T}(s,\sigma) \, \Gamma_\sigma \, \op{T}(s,\sigma)'
	\label{eq:pi-dot}
\end{align}
for all $\sigma\in[0,s]$, where
\begin{align}
	\Gamma_\sigma & \doteq 
	(\dot P_\sigma - \dot{\widetilde P}_\sigma)  - (\hat A + \demi \hat B \hat B' (P_\sigma + \widetilde P_\sigma))' (P_\sigma - \widetilde P_\sigma)
	\nn\\
	& \qquad - (P_\sigma - \widetilde P_\sigma) \, (\hat A + \demi \hat B \hat B' (P_\sigma + \widetilde P_\sigma))
	% \nn\\
	% & 
	= 0\,,
	\nn
\end{align}
in which the equality with zero follows by virtue of the fact that $P_\sigma$, $\widetilde P_\sigma$ both satisfy the DRE \er{eq:DRE}. Consequently, \er{eq:pi-dot} implies that $\dot\pi_\sigma = 0$ for all $\sigma\in[0,s]$, so that integration with respect to $\sigma\in[0,s]$ yields $\pi_s = \pi_0$. Recalling \er{eq:monotone-pi}, it follows immediately that 
\begin{align}
	P_s - \widetilde P_s
	& = \pi_s
	% \nn\\
	% & 
	= \pi_0 = \op{T}(s,0)\, (P_0 - \widetilde P_0)' \, \op{T}(s,0)'
	\nn
\end{align}
Recalling that $P_0 - \widetilde P_0\in\cSym_{\le 0}^{n\times n}$, and noting that $s\in[0,t^*)$ is arbitrary, yields the required assertion \er{eq:monotone}.
\end{proof}

%%%%%%%%%%%%%%%%%%%%%%%%%%%%%%%%%%%%%%%%%%%%%%%%%%%%%%%%%%%%%%%%%%%%%%%
%%
%%		Bibliography.
%%

\bibliographystyle{IEEEtran}
\bibliography{primal}

%%%%%%%%%%%%%%%%%%%%%%%%%%%%%%%%%%%%%%%%%%%%%%%%%%%%%%%%%%%%%%%%%%%%%%%
%%
%%		End of document.
%%

\end{document}